\crefname{equation}{equation}{equations}
\newcommand{\SL}{\operatorname{SL}}
\newcommand{\Z}{\mathbb{Z}}
\newcommand{\F}{\mathbb{F}}
\newcommand{\Proj}{\mathcal{P}} 
\newcommand{\trace}{\operatorname{tr}}
\newcommand{\Aut}{\operatorname{Aut}}
\newcommand{\Out}{\operatorname{Out}}
\newcommand{\SAut}{\operatorname{SAut}}
\newcommand{\SAutF}[1]{\SAut(\mathbb{F}_{#1})}
\DeclareMathOperator{\Inn}{Inn}
\DeclareMathOperator{\cspan}{\overline{span}}
\DeclareMathOperator{\ran}{ran}
\newtheorem{theorem}{Theorem}
\newtheorem{lemma}[theorem]{Lemma}
\newtheorem{corollary}[theorem]{Corollary}
\newtheorem{proposition}[theorem]{Proposition}
\newtheorem{remark}[theorem]{Remark}
\title{  $\Aut(\F_5)$ has property $(T)$}
\author[1]{Marek Kaluba}
\author[2]{Piotr W. Nowak}
\author[3]{Narutaka Ozawa}
\affil[1]{Adam Mickiewicz University, Pozna\'{n}, Poland}
\affil[1,2]{Institute of Mathematics of the Polish Academy of Sciences, Warsaw, Poland}
\affil[3]{Research Institute for Mathematical Sciences, Kyoto, Japan}
\begin{document}
\maketitle

\begin{abstract}
We give a constructive, computer-assisted proof that $\Aut(\F_5)$, the automorphism group of the free group on $5$ generators, has Kazhdan's property~$(T)$.
\end{abstract}

\epigraph{\textit{But they are useless. They can only give you answers.}}{--- Pablo Picasso, speaking of computers}

Kazhdan's property $(T)$ is a powerful rigidity property of groups with many applications.
Few groups are known to satisfy the property and the main classes of examples are:
lattices in higher rank Lie groups, whose (rigid) algebraic structure allows to deduce various properties, including property $(T)$;
groups acting on complexes whose links satisfy a certain spectral condition, including groups acting on buildings;
and certain hyperbolic groups, such as lattices in $\operatorname{Sp}(n,1)$ and some random hyperbolic groups in the Gromov density model.

The main purpose of this article is to prove property $(T)$ for a new group and give an estimate of its Kazhdan constant.
Let $\SAutF{5}$ denote the special automorphism group of the free group on $5$ generators, with $S$ being its standard generating set consisting of transvections.

\begin{theorem}\label{theorem: main - Aut(F5) has (T)}
The group $\SAutF{5}$ has property $(T)$ with Kazhdan constant
\[\kappa(\SAutF{5}, S) > 0.18.\]
\end{theorem}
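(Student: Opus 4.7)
The plan is to apply the Ozawa characterization of property $(T)$: a finitely generated group $G$ with symmetric generating set $S$ has $(T)$ if and only if there is some $\lambda > 0$ with
\[ \Delta^2 - \lambda \Delta \in \Sigma^2\mathbb{R}[G], \]
where $\Delta = \sum_{s \in S}(1-s)^*(1-s)$ is the group Laplacian and $\Sigma^2\mathbb{R}[G]$ denotes the cone of sums of hermitian squares in the real group ring. An explicit decomposition $\Delta^2 - \lambda \Delta = \sum_i \xi_i^*\xi_i$ moreover yields the Kazhdan constant bound $\kappa(G,S) \geq \sqrt{\lambda/|S|}$. The theorem therefore reduces to producing such a decomposition in $\mathbb{R}[\SAutF{5}]$ with $\lambda$ above the explicit threshold $0.18^2\cdot|S|$.

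The search for such a decomposition, with each $\xi_i$ supported on a fixed finite ball $B\subseteq\SAutF{5}$, is equivalent to finding a positive semidefinite Gram matrix $Q$ indexed by $B$ whose image under the natural convolution map equals $\Delta^2 - \lambda\Delta$. This is a semidefinite programme, but even for $B$ the ball of word-length $2$ the raw problem size is astronomical. The workaround, used by the authors for $\SAutF{n}$ with $n \geq 6$, is to symmetrise: the hyperoctahedral group $W = \Z_2\wr\operatorname{Sym}(5)$ of signed permutations of the free basis acts on $\mathbb{R}[\SAutF{5}]$ preserving $\Delta$ and $\Delta^2$, so one may restrict to $W$-invariant $Q$; decomposing the permutation module $\mathbb{R}[B]$ into $W$-isotypic components then splits the SDP into a direct sum of much smaller blocks. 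I would solve this reduced programme numerically with high precision to produce an approximate optimiser $Q_0$ for some $\lambda_0$ comfortably above the target threshold, and finally convert $Q_0$ into a rigorous certificate by rounding to a nearby $W$-invariant matrix $Q$ with exact rational entries, absorbing the resulting convolution discrepancy into a slight decrease of $\lambda$, and verifying positive semidefiniteness of $Q$ block-by-block by an exact or interval-arithmetic $LDL^*$ factorisation.

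The principal obstacle is that $n=5$ is the first value at which the authors' earlier inductive argument --- which transfers positivity information from $\SAutF{n-1}$ to $\SAutF{n}$ via the natural decomposition of $\Delta^2$ into a square, an adjacent, and an opposite part and produces a positive spectral gap automatically when $n \geq 6$ --- degenerates. One is forced to attack the SDP on $\SAutF{5}$ head-on, and the combination of the resulting problem size with the narrow margin between the maximum feasible $\lambda$ and the bound needed for $\kappa > 0.18$ is the real difficulty: every stage (the choice of $B$, the precision and symmetry-adaptation of the solver, and the rigorous rounding scheme) must be engineered so that enough slack survives into the final certificate.
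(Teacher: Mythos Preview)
Your overall architecture matches the paper's: Ozawa's positivity criterion, a semidefinite search for a Gram matrix supported on $B_2(e,S)$, symmetrisation under $\Sigma=\mathbb{Z}_2\wr S_5$ with a Wedderburn block decomposition, and a rigorous certification. Two corrections are needed, one numerical and one substantive.

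\medskip
\textbf{Kazhdan constant.} The correct inequality is $\kappa(G,S)\geqslant\sqrt{2\lambda/|S|}$, not $\sqrt{\lambda/|S|}$. With $|S|=80$ this puts the threshold for $\kappa>0.18$ at $\lambda>1.296$. The paper's solver returns $\lambda_0=1.3$ and certifies $\lambda>1.2999$; under your formula one would need $\lambda>2.59$, which the optimisation on $B_2$ does not appear to reach. The margin really is that thin.

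\medskip
\textbf{Certification.} This is where your proposal diverges from the paper and where there is a genuine gap. You propose to round the Gram matrix to an exact rational $Q$, verify $Q\succcurlyeq 0$ by an $LDL^*$ factorisation, and then ``absorb the convolution discrepancy into a slight decrease of $\lambda$''. But you do not say how that absorption is carried out, and it cannot be done without an extra ingredient. If the residual $r=\Delta^2-\lambda_0\Delta-\mathbf{x}^*Q\mathbf{x}$ is an arbitrary small self-adjoint element of $\mathbb{R}G$, there is no reason for $r+\varepsilon\Delta$ to be a sum of squares for small $\varepsilon$: e.g.\ a component of the form $-\varepsilon'\,e$ is negative under the trivial representation for every $\varepsilon$.

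The paper handles this differently, and the mechanism is the actual content of the certification. One takes an approximate square root $Q$ of $P$, then \emph{projects each column of $Q$ onto the augmentation ideal} $IG$ (subtracting the column mean). The resulting $\xi_i$ lie in $IG$, so $\sum_i\xi_i^*\xi_i$ is automatically a sum of squares with no positivity check needed, and the residual $r=\Delta^2-\lambda_0\Delta-\sum_i\xi_i^*\xi_i$ lies in $IG$ as well. Now the order-unit lemma (Lemma~\ref{lemma: Delta is an order unit for the augmentation ideal}) applies: for self-adjoint $r\in IG$ supported on $B_{2^m}$ one has $r+2^{2m-2}\|r\|_1\,\Delta\geqslant 0$ (using that $S$ has no involutions). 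With $m=2$ this gives $\Delta^2-(\lambda_0-4\|r\|_1)\Delta\geqslant 0$. The interval-arithmetic computation yields $\|r\|_1<8.41\cdot10^{-6}$, so $\lambda_0-4\|r\|_1>1.2999$. Without the augmentation projection and the order-unit bound your ``absorption'' step has no justification.

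\medskip
A minor chronological point: the inductive argument for $n\geqslant 6$ you refer to is from the \emph{later} paper \cite{Kaluba2018}, which relies on the techniques developed here; the present paper is the base case, and $n=5$ remains inaccessible to that induction.
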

As part of the proof we provide explicit elements $\xi_i$ in the group ring of $\SAutF{5}$
and $\lambda > 0$ such that
\[\Delta^2-\lambda\Delta \approx \sum_{i=1}^n \xi_i^*\xi_i,\]
up to a small, controlled error.
The $\xi_i$s are obtained via semidefinite optimization.
Then we show that there exists a mathematically exact solution (for a slightly smaller $\lambda$) in close proximity of the approximate one.

This method of proving property $(T)$ was forseen by the third author in  \cite{Ozawa2016} and used effectively by Netzer and Thom \cite{Netzer2015}, Fujiwara and Kabaya \cite{Fujiwara2017} and the first two authors
\cite{Kaluba2017} to reprove property $(T)$ and give new estimates for Kazhdan constants for the
groups $\SL_n(\mathbb{Z})$, $n=3,4,5$.
We describe in detail the theoretical aspects, the algorithm which is used to produce the solution, as well as the certification procedure that allows to obtain,
out of this approximate solution, a mathematically correct conclusion about the existence of an exact one.
In particular we describe the modifications to the algorithm of \cite{Kaluba2017} which made a search for $\xi_i$s in the context of $\SAutF{5}$ and the certification of the result technically possible.

As an immediate consequence of \cref{theorem: main - Aut(F5) has (T)} we obtain
\begin{corollary}
The groups $\Aut(\mathbb{F}_5)$ and $\Out(\mathbb{F}_5)$ have property $(T)$.
\end{corollary}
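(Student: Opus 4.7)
The plan is to deduce the corollary from \cref{theorem: main - Aut(F5) has (T)} by invoking two standard permanence properties of $(T)$: passage to finite-index overgroups and passage to quotients.

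First I would handle $\Aut(\F_5)$. The abelianization map $\F_5 \twoheadrightarrow \Z^5$ induces a surjection $\Aut(\F_5) \twoheadrightarrow \GL_5(\Z)$, and composing with the determinant yields a homomorphism $\Aut(\F_5) \twoheadrightarrow \{\pm 1\}$ whose kernel is exactly $\SAutF{5}$. Hence $\SAutF{5}$ sits as an index-$2$ subgroup of $\Aut(\F_5)$. Since property $(T)$ is inherited by finite-index overgroups (any unitary representation of $\Aut(\F_5)$ with almost invariant vectors restricts to such a representation of $\SAutF{5}$, and one then induces invariance back up using a transversal of the finite quotient), \cref{theorem: main - Aut(F5) has (T)} gives property $(T)$ for $\Aut(\F_5)$.

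Next I would deduce $(T)$ for $\Out(\F_5)$. By definition $\Out(\F_5) = \Aut(\F_5)/\Inn(\F_5)$ is a quotient of $\Aut(\F_5)$, and property $(T)$ passes to quotients: any unitary representation of the quotient pulls back to a representation of $\Aut(\F_5)$, so almost invariant vectors produce invariant ones. Applying this to the quotient map finishes the argument.

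The main \emph{conceptual} step is really the theorem itself; there is no substantive obstacle in the corollary, only the verification that $\SAutF{5}$ is indeed of finite index in $\Aut(\F_5)$ and that the two permanence properties apply. Both are classical, and the whole deduction reduces to citing them.
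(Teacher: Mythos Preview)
Your proposal is correct and matches the paper's intended reasoning: the paper states the corollary as ``an immediate consequence'' of \cref{theorem: main - Aut(F5) has (T)} without further proof, and the deduction you give---$\SAutF{5}$ has index $2$ in $\Aut(\F_5)$ so property $(T)$ passes up, and $\Out(\F_5)$ is a quotient of $\Aut(\F_5)$ so property $(T)$ passes down---is exactly the standard argument behind that phrase.
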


Questions whether any of the groups above has property $(T)$ is discussed in many places, e.g. \cite[Question 7]{Bridson2006},
 \cite{Lubotzky2001}, \cite[page 345]{Breuillard2014}, \cite[page 4]{Bogopolski2010}, \cite[page 63]{Ellenberg2014}, to name a few.
Applications to the product replacement algorithm were discussed in \cite{Lubotzky2001}.

The group $\Aut(\mathbb{F}_n)$ is known not to have property $(T)$ for $n=2$  and $n=3$ see \cite{McCool1989} as well as \cite{Grunewald2009,Bogopolski2010}.
It was suspected nevertheless that $\Aut(\mathbb{F}_n)$ might have property $(T)$ for $n$ sufficiently large, with all $n\geqslant 4$ being open.
In the case of $n=4$ our approach -- both the one in \cite{Kaluba2017} and the symmetrized version presented below -- did not give a positive answer, in the sense that we were not able to obtain a sufficiently approximate solution on the ball of radius $2$.
There are three possible reasons for this behavior.
One possibility is that $\Aut(\F_4)$ does not have property $(T)$ (as somehow anticipated by \cite{Bogopolski2010}).
Another is that $\Aut(\F_4)$ has property $(T)$, however no $\xi_i$ as above, supported on the ball of radius $2$ exist.
Search for $\xi_i$s supported on the ball of radius $3$ is already too expensive (in terms of memory and computation time) to be handled by our implementation.
Finally, the third possibility is that again $\Aut(\F_4)$ has property $(T)$ which is wittnessed on the ball of radius $2$, but the spectral gap is so small, that the certification process does not yield a positive answer.

It is an interesting question whether the fact that $\Aut(\F_5)$ has property $(T)$ is sufficient to deduce property $(T)$ for $\Aut(\F_n)$ for all $n\geqslant 5$, similarly as in the case of lattices in higher rank Lie groups.
In \cref{section: extrapolating property (T)} we show that $\Aut(\F_{n+1})$ has a large subgroup with property $(T)$, if $\Aut(\F_n)$ has $(T)$.
However, all attempts to prove property $(T)$ for $\Aut(\F_n)$, $n\geqslant 6$ using property $(T)$ for $\Aut(\F_5)$  seem to break down at the currently open Question 12 in \cite{Bridson2006}.

It was proved in \cite{Gilman1977} that for $n\geqslant 3$  the  group
$\Out(\mathbb{F}_n)$ is residually finite alternating and it is a consequence of \cref{theorem: main - Aut(F5) has (T)} that
the corresponding family of alternating quotients of $\Out(\mathbb{F}_5)$
can be turned into a family of expanders. This was proved earlier in greater generality by Kassabov \cite{Kassabov2007}, however in our case
the generating set is explicit and the same in all of these finite groups, in the sense that it is the image of the generating set of $\Out(\mathbb{F}_5)$.
In particular, this gives an alternative and independent of \cite{Kassabov2007} negative answer to question \cite[Question 10.3.2]{Lubotzky1994}.

\cref{theorem: main - Aut(F5) has (T)} also allows us to give an answer to a question of Popa on the existence of certain crossed product von Neumann algebras with 
property (T), see \cref{remark: Popa's question}.

\medskip
\textsl{Note:}
The techniques developed here became later crucial in \cite{Kaluba2018} while proving property $(T)$ for $\Aut(F_n)$ for all $n\geqslant 6$.
However, the case $n=5$ is not accessible via the argument in \cite{Kaluba2018} and the only existing proof of the case $n=5$ is in the current paper.

\paragraph{Acknowledgements}

MK is partially supported by the National Science Center, Poland grant 2015/19/B/ST1/01458 and 2017/26/D/ST1/00103.

NO is partially supported by JSPS KAKENHI, Grant Number 17K05277 and 15H05739.

This project has received funding from the European Research Council (ERC) under the European Union's Horizon 2020 research
and innovation programme (grant agreement no. 677120-INDEX).

This research was supported in part by PL-Grid Infrastructure, grant ID: propertyt2.

\tableofcontents

\section{Property $(T)$, real algebraic geometry and semi\-definite programming}
\label{section: Background}
Recall that a group $G$ generated by a finite set $S$  has property $(T)$ if there exists $\kappa>0$ such that
$$\sup_{s\in S} \Vert \pi_s v-v\Vert \geqslant \kappa \Vert v\Vert,$$
for every unitary representation $\pi$ of $G$ with no non-zero invariant vectors.
The supremum of all such $\kappa$'s that satisfy the condition above is called the Kazhdan constant of $G$ with respect to the generating set $S$
and is denoted $\kappa(G,S)$.
For an excellent overview of property $(T)$, its many descriptions and applications see \cite{Bekka2008}.

Let $G$ be a discrete group generated by a finite set $S=S^{-1}$.
Given a ring $R$ we consider the associated group ring $R G$, that consists of finitely supported functions $\xi\colon G\to R$.
We will use the notation $\xi=\sum_{g\in G} \xi_g g$, where $\xi_g\in R$ for each $g\in G$, to denote the elements of the group ring $RG$.
The product in $RG$ is then defined by the convolution $(\xi \eta)_g=\sum_{h\in G} \xi_{h}\eta_{h^{-1}g}$.
Recall that the augmentation ideal $IG$ is the kernel of the augmentation map $\omega\colon RG\to R$, $\xi\mapsto \sum_{g\in G} \xi_g$.
The group ring $R G$ is equipped with an involution $^*\colon RG\to RG$, induced by the inversion map on $G$ and defined explicitly as $(\xi^*)_g=\xi_{g^{-1}}$ for any $\xi\in R G$.

The unnormalized Laplacian is an element of the real group ring $\mathbb{R}G$ defined as
\[\Delta = \vert S\vert - \sum_{s\in S} s.\]
In \cite{Ozawa2016} the following characterization of property $(T)$ for discrete groups was proved.
\begin{theorem}
A finitely generated group $G$ has Kazhdan's property $(T)$ if and only if there exist a positive number $\lambda>0$ and a finite family $\{\xi_1, \dots, \xi_n\}$ of elements of the real group ring
$\mathbb{R}G$ such that
\begin{equation}\label{equation: Taka's characterization Detla^2-lambda Delta = sum of xi_i^*xi}
\Delta^2-\lambda\Delta = \sum_{i=1}^n \xi_i^*\xi_i.
\end{equation}
\end{theorem}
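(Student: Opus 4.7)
The plan is to prove the two directions of the equivalence separately, with the forward direction being essentially algebraic and the reverse direction requiring a functional-analytic perturbation argument.

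For the easy direction, assume the identity $\Delta^2-\lambda\Delta=\sum_{i=1}^n \xi_i^*\xi_i$ and fix a unitary representation $\pi\colon G\to\mathcal{U}(\Hilbert)$ without non-zero invariant vectors. Applying $\pi$ to both sides gives $\pi(\Delta)^2-\lambda\pi(\Delta)=\sum_i \pi(\xi_i)^*\pi(\xi_i)\geqslant 0$, so the spectrum of the self-adjoint positive operator $\pi(\Delta)$ lies in $\{0\}\cup[\lambda,\infty)$. Because $S$ generates $G$, the kernel of $\pi(\Delta)$ consists exactly of $\pi$-invariant vectors, hence it is trivial and $\pi(\Delta)\geqslant\lambda I$. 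Combined with the standard identity $\langle\pi(\Delta)v,v\rangle=\tfrac{1}{2}\sum_{s\in S}\|\pi(s)v-v\|^2$ (valid because $S=S^{-1}$), this yields $\sup_{s\in S}\|\pi(s)v-v\|\geqslant\sqrt{2\lambda/|S|}\,\|v\|$, so $G$ has property $(T)$ with $\kappa(G,S)\geqslant\sqrt{2\lambda/|S|}$.

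For the reverse direction, I would first reformulate property $(T)$ spectrally: it is well known (and follows from the computation above run backwards) that $G$ has property $(T)$ with Kazhdan constant $\kappa$ if and only if $\pi(\Delta)\geqslant 2\kappa^2 I$ on $(\Hilbert^\pi)^\perp$ for every unitary representation $\pi$. Choosing $\lambda_0<2\kappa^2$ and decomposing $\Hilbert=\Hilbert^\pi\oplus(\Hilbert^\pi)^\perp$, the element $\Delta^2-\lambda_0\Delta$ acts as $0$ on $\Hilbert^\pi$ (since $\pi(\Delta)$ vanishes there) and as $\pi(\Delta)(\pi(\Delta)-\lambda_0 I)\geqslant 0$ on the orthogonal complement. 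Thus $\Delta^2-\lambda_0\Delta$ is positive in every unitary representation, equivalently $\Delta^2-\lambda_0\Delta\geqslant 0$ in the universal C*-algebra $C^*(G)$.

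The main obstacle is now to upgrade this operator-algebraic positivity to an \emph{algebraic} sum-of-Hermitian-squares decomposition with the $\xi_i$ finitely supported, since the cone $\Sigma^2(\mathbb{R}G)=\{\sum\xi_i^*\xi_i:\xi_i\in\mathbb{R}G,\ n<\infty\}$ is strictly smaller than the intersection of $\mathbb{R}G$ with the positive cone of $C^*(G)$ and is generally not closed. The plan is to exploit the slack: for any $\lambda<\lambda_0$, the element $\Delta^2-\lambda\Delta=(\Delta^2-\lambda_0\Delta)+(\lambda_0-\lambda)\Delta$ is strictly positive in every representation, bounded below by $(\lambda_0-\lambda)\pi(\Delta)$. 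The standard device is to apply a Hahn--Banach separation in $\mathbb{R}G_{\mathrm{sa}}$: if $\Delta^2-\lambda\Delta$ were not in $\Sigma^2$, then a real linear functional $\varphi$ would separate it from the cone, with $\varphi$ necessarily positive on all $\xi^*\xi$; such $\varphi$ extends to a positive functional on $C^*(G)$, and the GNS construction produces a unitary representation $\pi_\varphi$ contradicting $\pi_\varphi(\Delta^2-\lambda\Delta)\geqslant 0$. The delicate point, which is where the strict spectral gap is crucial, is ensuring the separating functional is bounded enough to extend, and that after extension the resulting decomposition truncates to a finite one in the group ring; this is typically handled by approximating the square root $\sqrt{\Delta^2-\lambda\Delta}\in C^*(G)$ by finitely supported elements and absorbing the truncation error into the gap $\lambda_0-\lambda$. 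This truncation-and-absorption step is the heart of the argument and the main technical hurdle.
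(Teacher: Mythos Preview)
The paper does not prove this theorem; it is quoted from \cite{Ozawa2016} and used as a black box. So there is no ``paper's own proof'' to compare against here, only Ozawa's original argument.

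Your easy direction is fine and matches the standard argument.

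For the hard direction your outline is in the right neighbourhood but misses the key technical device, and the last paragraph conflates two distinct strategies. The Hahn--Banach route works as follows: one works inside the augmentation ideal $IG$ (not all of $\mathbb{R}G$), and the crucial point is that $\Delta$ is an \emph{order unit} for the hermitian part of $IG$ relative to the sums-of-squares cone---this is precisely \cref{lemma: Delta is an order unit for the augmentation ideal} in the present paper. An order unit makes the cone $\Sigma^2\cap IG$ have nonempty algebraic interior, so Eidelheit--Kakutani separation applies and any separating functional is automatically bounded for the order-unit seminorm; it then extends to a state, GNS gives a unitary representation with no invariants, and positivity of $\pi(\Delta^2-\lambda_0\Delta)$ yields a contradiction. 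The conclusion is that $\Delta^2-\lambda_0\Delta+\varepsilon\Delta\in\Sigma^2$ for every $\varepsilon>0$, i.e.\ $\Delta^2-\lambda\Delta\in\Sigma^2$ for every $\lambda<\lambda_0$. No truncation step is needed after Hahn--Banach: the cone $\Sigma^2$ already consists of finite sums of squares of finitely supported elements, so membership in it is exactly the desired conclusion.

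Your alternative suggestion---approximate $\sqrt{\Delta^2-\lambda\Delta}\in C^*(G)$ by finitely supported elements and absorb the error into the gap---is a different strategy, and as stated it has a real gap: the truncation error $r$ is a self-adjoint element of $IG$ that need not be a sum of squares, so you cannot simply ``absorb'' it without again invoking the order-unit property $r+R\Delta\in\Sigma^2$. In other words, both routes ultimately rest on \cref{lemma: Delta is an order unit for the augmentation ideal}, which your sketch does not identify.
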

The characterization above can be used to prove property $(T)$ for a particular group $G$ by providing an explicit solution of
\cref{equation: Taka's characterization Detla^2-lambda Delta = sum of xi_i^*xi}.
In particular, the fact that the right hand side is a finite sum of (hermitian) squares of finitely supported functions allows us to try to obtain the $\xi_i$'s using semidefinite programming.

Let $E\subseteq G$ be a finite subset and let $B_r(e,S)$ denote the ball (centered at $e$) of radius $r$ in the word-length metric on $G$ induced by the generating set $S$.
Although some of the following considerations are true for arbitrary $E$, we use $E = S\cup S^2=B_2(e,S)$ (or $E=B_3(e,S)$) in practice.
Fix $\mathbf{x}$, an ordered basis of the finite dimensional subspace $\langle E\rangle_\mathbb{R} \subseteq \mathbb{R}G$.
Then \cref{equation: Taka's characterization Detla^2-lambda Delta = sum of xi_i^*xi} has a solution in $\langle E\rangle_\mathbb{R}$ if and only if there exists a semi-positive definite matrix $P$ such that
$$\Delta^2-\lambda \Delta = \mathbf{x}^* P \mathbf{x}^T.$$
Indeed, by positive semidefiniteness $P$ can be written as $P=QQ^T$ and then
$$ \mathbf{x}^* P \mathbf{x}^T = (\mathbf{x}Q)^*(\mathbf{x}Q)^T = \sum \xi_i^*\xi_i,$$
where $\xi_i=\mathbf{x}q_i$ for $q_i$ the $i$-th column of $Q$. In what follows we will drop the distinction and use $\xi_i$ for both a column of a matrix and the corresponding group algebra element without mentioning the basis $\mathbf{x}$.

Let $\mathbb{M}_E$ denote the set of real matrices with columns and rows indexed by $E$.
For $t\in G$ define a matrix $\delta_t \in \mathbb{M}_E$ by setting
\[\big(\delta_t\big)_{x,y}=\left\lbrace \begin{array}{ll}
1 & \text{ if }x^{-1}y=t\\
0 &\text{ otherwise}.
\end{array}\right.\]
Equivalently, this is the element $t\in G$ viewed as an endomorphism of $\langle E \rangle_{\mathbb{R}} $ defined by left-regular representation of $G$ on $\mathbb{R} G$.
For matrices $A,B\in \mathbb{M}_E$ define
\[\langle A,B\rangle = \sum_{x,y\in E} A_{x,y} B_{x,y}=\trace(A^T B),\]
where $\trace (A)=\sum_{x\in E} A_{x,x}$ is the standard trace on $\mathbb{M}_E$.
Then
\[\langle \delta_t,P\rangle =\sum_{x^{-1}y=t} P_{x,y},\]
and for every $t\in E^{-1}E$ the value $(\Delta^2-\lambda \Delta)_t$ at $t$ can be expressed as
\[(\Delta^2-\lambda\Delta)_t= \langle \delta_t, P\rangle.\]
This reduces the problem of existence of a solution to
\cref{equation: Taka's characterization Detla^2-lambda Delta = sum of xi_i^*xi}
to the following semidefinite optimization problem in standard form
(we use $P \succcurlyeq 0$ to denote the constraint of $P$ being positive semidefinite).
\begin{align}
\tag{OP}\label{equation: original optimization problem}
\begin{split}
\text{minimize} \quad & -\lambda\\
\text{subject to} \quad & P\succcurlyeq 0,\quad P\in \mathbb{M}_E,\\
& \langle \delta_t, P\rangle = (\Delta^2 - \lambda\Delta)_t\quad \text{for all } t\in E^{-1}E.
\end{split}
\end{align}

There are \emph{solvers} specialized in solving such problems numerically. Once a solution $(P, \lambda_0)$ is obtained numerically (up to specified precision), the next step is to \emph{certify} its correctness.
This is of utmost importance, as the numerical solution by itself does not provide mathematical certainty that \cref{equation: Taka's characterization Detla^2-lambda Delta = sum of xi_i^*xi} indeed has a solution in $\mathbb{R} G$.
The solution $(P, \lambda_0)$ gives only $\Delta^2 - \lambda_0 \Delta \approx \mathbf{x}P\mathbf{x}^T$, an approximate equality.
E.g. ``positive semidefinite'' matrix $P$ returned by the solver may have negative eigenvalues which are very close to $0$ (i.e. up to the requested precision),
or the linear constraints defined by $\Delta^2 - \lambda \Delta$ might be slightly violated.
Moreover, even though some solvers claim to certify the solution, it is done in floating point arithmetic, which provides no mathematical certainty, see \cite{Neumaier2006}.
Our certification process turns the approximate solution into a proof of the existence of an exact solution, at the cost of decreasing~$\lambda_0$.

The process consists of finding (the real part of) the square root $Q$ of $P$ (i.e. $QQ^T \approx P$), and
projecting the obtained matrix $Q$ onto the augmentation ideal,
simply by subtracting the mean value of the $i$-th column of $Q$ from each entry in that column.
This procedure, e.g. performed in rational (multiprecision) arithmetic, provides an explicit matrix $\overline{Q}$, whose columns correspond to elements of the augmentation ideal supported on $E$.
In our case this is done in interval arithmetic as described in detail later.
This process introduces additional error into each element of $\mathbf{x}^*\overline{Q}\left(\mathbf{x}\overline{Q}\right)^T = \sum_i \xi_i^*\xi_i$, which in general depends on the accuracy obtained by the chosen numerical solver.
However (and most importantly) after the projection,
\cref{lemma: Delta is an order unit for the augmentation ideal} below allows to dominate $r = \Delta^2 - \lambda_0\Delta - \sum_i \xi_i^*\xi_i$, the remainder of the solution.
Note that $r$ gathers both the inaccuracy of the solver and the error introduced by the projection.

For $\xi\in \mathbb{R}G$ let $\Vert \xi\Vert_1=\sum_{g\in G}\vert \xi_g\vert$ be the norm of $\xi$ in $\ell_1(G)$.
We write $a \geqslant b$ for elements $a,b$ of a group ring to denote that $a-b$ enjoys a decomposition into sum of (hermitian) squares.
The following lemma allows to estimate the magnitude of errors involved in our computations and makes certification possible.
For a more thorough treatment of order units see \cite{Ozawa2016}, as well as \cite{Jameson1970,Schmudgen2009} for a more general context.

\begin{lemma}[\cite{Ozawa2016}, \cite{Netzer2015}]\label{lemma: Delta is an order unit for the augmentation ideal}
$\Delta$ is an order unit for the augmentation ideal $IG$, i.e.
for all self-adjoint elements $r\in IG$ there exists $R_0$ such that for all $R\geqslant R_0$ we have
\begin{equation}\label{equation: r+RDelta is PDF}
r+R\Delta\geqslant 0
\end{equation}
that is, $r+R\Delta$ allows a sum of squares decomposition.

Moreover, if $r$ is supported on $B_{2^{m}}$ then
\[R_0 \leqslant 2^{2m-1}\|r\|_1.\]
\textup{(}If $S$ contains no involution, then $R_0\leqslant 2^{2m-2}\|r\|_1$ suffices.\textup{)}
\end{lemma}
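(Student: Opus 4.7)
The plan is to reduce the statement to the following sub-inequality in the sum-of-squares order on $\mathbb{R}G$:
\[
(1-g)^*(1-g)\;\leqslant\;2\,\ell^2\,\Delta \qquad \text{for every } g\in G,
\]
where $\ell=\ell_S(g)$ is the word length of $g$, with the constant $2$ improving to $1$ whenever $S$ contains no involution. To prove this, write $g=s_1s_2\cdots s_\ell$ with $s_i\in S$ and use the telescoping identity
\[
1-g\;=\;\sum_{i=1}^\ell s_1\cdots s_{i-1}(1-s_i).
\]
The Cauchy--Schwarz-type inequality $\bigl(\sum_{i=1}^\ell a_i\bigr)^{*}\bigl(\sum_{i=1}^\ell a_i\bigr)\leqslant \ell\sum_{i=1}^\ell a_i^*a_i$ (immediate from $\sum_{i<j}(a_i-a_j)^*(a_i-a_j)\geqslant 0$) together with the unitarity of each $s_j$ in $\mathbb{R}G$ then gives
\[
(1-g)^*(1-g)\;\leqslant\;\ell\sum_{i=1}^\ell(1-s_i)^*(1-s_i)\;=\;\ell\sum_{i=1}^\ell(2-s_i-s_i^{-1}).
\]
Inspection of $2\Delta=\sum_{s'\in S}(2-s'-s'^{-1})$ yields $(2-s-s^{-1})\leqslant 2\Delta$ for every $s\in S$ (the omitted summands are sums of squares), which sharpens to $(2-s-s^{-1})\leqslant\Delta$ under the no-involution hypothesis because the summand for $s'=s^{-1}$ alone already equals $2-s-s^{-1}$.

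For the main inequality I exploit both self-adjointness ($r_g=r_{g^{-1}}$) and vanishing augmentation ($r_e=-\sum_{g\neq e}r_g$) to write
\[
r\;=\;\sum_{g\neq e} r_g(g-1)\;=\;-\tfrac{1}{2}\sum_{g\neq e}r_g(2-g-g^{-1})\;=\;-\tfrac{1}{2}\sum_{g\neq e}r_g(1-g)^*(1-g).
\]
The terms with $r_g<0$ already contribute a sum of squares, so the task reduces to dominating the positive-$r_g$ part by $R\Delta$. Invoking the sub-inequality and using the support assumption $\ell_S(g)\leqslant 2^m$ shows that any
\[
R\;\geqslant\;2^{2m}\sum_{g:\,r_g>0}r_g
\]
works. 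Finally $\sum_g r_g=0$ forces $\sum_{r_g>0}r_g=\sum_{r_g<0}|r_g|\leqslant\tfrac12\|r\|_1$, giving the stated bound $R_0\leqslant 2^{2m-1}\|r\|_1$; the improvement to $2^{2m-2}\|r\|_1$ in the no-involution case is precisely the factor-of-two saving in the sub-inequality.

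The main technical point is keeping the constants sharp. The exponent $2^{2m-1}$ requires two independent factor-of-$\tfrac12$ gains, one from the symmetry $r_g=r_{g^{-1}}$ that pairs $g$ with $g^{-1}$ in the expansion of $r$ as a combination of $(1-g)^*(1-g)$'s, and one from the augmentation constraint that balances the positive and negative parts of the coefficients. Degenerate cases, in particular involutions $s\in S$ for which $(1-s)^*(1-s)=2(1-s)=2-s-s^{-1}$, slot into the general formulas and require no separate treatment.
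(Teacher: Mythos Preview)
The paper does not supply its own proof of this lemma; it merely records the statement with attributions to \cite{Ozawa2016} and \cite{Netzer2015}. Your argument is correct and is essentially the standard proof from those references: the telescoping identity for $1-g$, the Cauchy--Schwarz bound in the sum-of-squares order, and the decomposition of a self-adjoint augmentation-zero element as $-\tfrac12\sum_{g\neq e} r_g(1-g)^*(1-g)$ are exactly the ingredients used there, and your bookkeeping of the constants (the two independent halvings from self-adjointness and from vanishing augmentation, plus the extra factor of $2$ saved when $S$ has no involutions) matches the stated bounds.
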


This allows to conclude that $r +\varepsilon\Delta \geqslant 0$, for an appropriately chosen $\varepsilon>0$ and thus
\[\Delta^2 - (\lambda_0 - \varepsilon ) \Delta = \sum_i \xi_i^*\xi_i + r + \varepsilon\Delta \geqslant 0.\]
When $\varepsilon$ can be chosen sufficiently small in relation to $\lambda_0$ (so that $\lambda_0-\varepsilon>0$), we can conclude that
\cref{equation: Taka's characterization Detla^2-lambda Delta = sum of xi_i^*xi} indeed has a solution in $\mathbb{R}G$.

\subsubsection*{Complexity of the problem}
The size of the set $E$ translates directly to the computational complexity of optimization problem \eqref{equation: original optimization problem}:
while each element $\xi_i$ is (by its definition) supported on $E$,
\cref{equation: Taka's characterization Detla^2-lambda Delta = sum of xi_i^*xi} defines $|E^{-1}E|$ linear constraints and $|E|^2$ variables in one semidefinite constraint of size $|E|\times |E|$.
It seems to be an interesting problem to understand the influence of the choice of the set $E$ on the obtained bound $\lambda_0$ and numerical properties of the problem.

The optimization problem has been solved numerically in several cases in \cite{Netzer2015, Fujiwara2017, Kaluba2017} yielding new estimates for $\lambda$.
The computations were successful for $E=B_2(e,S)$ e.g. for the groups $\SL_n(\Z)$ with $n=3,4,5$.
Finding a numerical solution of the problem on a computer may not be feasible if the size of the set $E$ is too large, and in fact this is the case for the groups $\SL_n(\Z)$, $n\geqslant 6$, or the groups $\SAutF{n}$ when $n\geqslant 4$.
For instance, the case of $\SAutF{5}$ results in $21\,538\,881$ variables and $11\,154\,301$ constraints, making it a prohibitively large problem.
In order to remedy this and to be able to find a solution to optimization problem \eqref{equation: original optimization problem}
e.g. for $\SAutF{5}$, we will use the symmetries of the set $E$ to reduce the problem's size.

\section{Problem symmetrization}

The size and computational complexity of optimization problem \eqref{equation: original optimization problem} can be significantly decreased by exploiting its rich symmetry derived from the group structure.
Roughly speaking, we will replace solving a large problem by solving many smaller problems and patching the solutions together to obtain a solution to the original, larger problem.
While there are $|E|^2$ variables in the original problem, the number of variables of the symmetrized version is $m_1^2+\dots+m_k^2$, where $m_k$ are the dimensions of the individual component problems.
Note that $\sum_i m_i \leqslant |E|$ i.e. the latter is much smaller than the former.
Moreover using orbit constraints we will reduce the number of constraints significantly.
In the case of $\SAutF{5}$ the result is a problem consisting of $13\,232$ variables in $36$ semidefinite constraints and $7\,229$ linear constraints, which can be realistically attacked with a numerical solver.
However, to be able to perform our computations we will need to give a different parametrisation of the space of possible solutions by the means of orbit reduction and the Wedderburn decomposition.

A somewhat parallel exposition of semidefinite programs size reduction using its symmetry is discussed in detail in \cite{deKlerk2011,Bachoc2012}.
We would like to point out that a numerical approach to numerical symmetrization of optimization problems that does not use
group representation theory is described in \cite{Murota2010}, which may be applicable e.g. to finitely presented groups where the symmetry is not clearly visible.

\subsection{Invariant SDP problems}
Given an automorphism $\sigma$ of $G$ and $\xi\in R G$, denote by $\sigma(\xi)$ the element of $RG$ defined by  $\sigma(\xi)_g=\xi_{\sigma(g)}$.
Let $\Sigma_E$ denote the group
\[\Sigma_E\coloneqq \left\lbrace \sigma \in \Aut(G): \sigma(\Delta)=\Delta \text{ and } \sigma(E) =E\right\rbrace.\]
The group $\Sigma_E$ is finite, determined by its image in $\operatorname{Sym}(S)$, the symmetric group on $S$.
Let $\Sigma$ denote any subgroup of $\Sigma_E$.
The action of $\Sigma$ on $E$ induces an action of $\Sigma$ on $\mathbb{M}_E$ by the formula
\[(\sigma(T))_{x,y} =T_{\sigma^{-1}(x),\sigma^{-1}(y)}.\]
The subspace of matrices invariant under this action will be denoted $\mathbb{M}_E^\Sigma$.

\begin{lemma}\label{lemma: Delta^2-lambda Delta is invariant under Sigma}
The expression $\Delta^2-\lambda\Delta$ is invariant under $\Sigma$.
\end{lemma}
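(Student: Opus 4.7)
The statement is essentially a direct consequence of the definition of $\Sigma_E$, so my plan is simply to unpack the definitions and verify that the action of $\sigma$ on $\mathbb{R}G$ respects multiplication.

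First, I would observe that any group automorphism $\sigma\in\Aut(G)$ extends $\mathbb{R}$-linearly to a $*$-algebra automorphism of the group ring $\mathbb{R}G$. Indeed, on basis elements $\sigma$ sends $g\mapsto \sigma(g)$ (modulo the convention issue in the definition $\sigma(\xi)_g=\xi_{\sigma(g)}$, which amounts to the pullback action and is still multiplicative), and since $\sigma$ preserves the group multiplication and the inverse map, the induced map on $\mathbb{R}G$ is a ring homomorphism commuting with the involution $^*$.

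Next, I would invoke the assumption $\sigma\in\Sigma\subseteq \Sigma_E$, which by definition gives $\sigma(\Delta)=\Delta$. Multiplicativity of the action then yields
\[
\sigma(\Delta^2)=\sigma(\Delta)\sigma(\Delta)=\Delta\cdot\Delta=\Delta^2,
\]
and combining with linearity of the $\Sigma$-action,
\[
\sigma(\Delta^2-\lambda\Delta)=\sigma(\Delta^2)-\lambda\,\sigma(\Delta)=\Delta^2-\lambda\Delta,
\]
which is the claim.

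There is really no obstacle here: the only substantive content is the fact that the action defined pointwise by $\sigma(\xi)_g=\xi_{\sigma(g)}$ is multiplicative on $\mathbb{R}G$. If one wanted to be explicit, this can be checked directly from the convolution formula $(\xi\eta)_g=\sum_h \xi_h\eta_{h^{-1}g}$ together with $\sigma$ being a group homomorphism, so at most a one-line verification is needed before concluding.
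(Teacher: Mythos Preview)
Your argument is correct and is essentially the same as the paper's: the paper simply carries out the explicit coefficient computation $(\Delta^2-\lambda\Delta)_{\sigma(t)}=(\Delta^2-\lambda\Delta)_t$ via the convolution formula, which is exactly the ``one-line verification'' of multiplicativity you mention at the end. Your phrasing in terms of $\sigma$ extending to a ring automorphism of $\mathbb{R}G$ fixing $\Delta$ is just a slightly more conceptual packaging of the same calculation.
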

\begin{proof}
The verification is straightforward:
\begin{align*}\label{equation: invariance of LHS under Sigma}
(\Delta^2 -\lambda \Delta)_{\sigma(t)}&= \sum_{g\in G} (\Delta-\lambda I)_g \Delta_{g^{-1} \sigma(t)}\\
&= \sum_{g\in G} (\Delta-\lambda I)_g \Delta_{\sigma\left(\sigma^{-1}(g^{-1})t\right)}\\
&= \sum_{g\in G} (\Delta-\lambda I)_{\sigma^{-1}(g)} \Delta_{\sigma^{-1}(g^{-1})t}\\
&=(\Delta^2 -\lambda \Delta)_t,
\end{align*}
since, in particular, $\sigma^{-1}$ is a bijection.
\end{proof}

\begin{lemma}\label{lemma: properties of sigma}
We have $\delta_{\sigma(t)}=\sigma(\delta_t)$ and $\delta_{t^{-1}}=\delta_t^T$.\qed
\end{lemma}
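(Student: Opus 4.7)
Both identities follow by direct entry-wise comparison from the definitions of $\delta_t$ and of the $\Sigma$-action on $\mathbb{M}_E$, so the plan is simply to unwind those definitions on both sides and match indicator conditions.

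For the first identity $\delta_{\sigma(t)} = \sigma(\delta_t)$, I would compute $(\sigma(\delta_t))_{x,y}$ using the action formula $(\sigma(T))_{x,y} = T_{\sigma^{-1}(x), \sigma^{-1}(y)}$, giving $(\delta_t)_{\sigma^{-1}(x), \sigma^{-1}(y)}$, which is $1$ exactly when $\sigma^{-1}(x)^{-1}\sigma^{-1}(y) = t$. Since $\sigma$ is a group automorphism, this is equivalent to $\sigma^{-1}(x^{-1}y) = t$, i.e.\ $x^{-1}y = \sigma(t)$, which is precisely the condition defining $(\delta_{\sigma(t)})_{x,y} = 1$. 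Entries agree in every position, so the matrices coincide.

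For the second identity $\delta_{t^{-1}} = \delta_t^T$, I would observe that $(\delta_t^T)_{x,y} = (\delta_t)_{y,x}$ equals $1$ iff $y^{-1}x = t$, which by inversion is the same as $x^{-1}y = t^{-1}$, the defining condition for $(\delta_{t^{-1}})_{x,y} = 1$.

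There is no real obstacle here; the only mild subtlety is being careful that $\sigma$ is an automorphism (so that $\sigma^{-1}(g^{-1}h) = \sigma^{-1}(g)^{-1}\sigma^{-1}(h)$) and that $x,y$ range over the $\Sigma$-invariant set $E$, so that $\sigma^{-1}(x), \sigma^{-1}(y) \in E$ and the indexing makes sense. Both statements are two-line computations.
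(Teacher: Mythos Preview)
Your proposal is correct; the paper itself gives no proof at all (the lemma is stated with a trailing \qed), and what you wrote is exactly the routine entry-wise verification one would supply if asked to fill in the details.
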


In \cite{Bachoc2012} a semidefinite problem is said to be invariant with respect to an action of a group $G$ if for every solution $P$, $gP$ is also a solution for every $g\in G$.

\begin{proposition}
Optimization problem \eqref{equation: original optimization problem} is $\Sigma$-invariant.
\end{proposition}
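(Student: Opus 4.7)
The plan is to show that for any $\sigma \in \Sigma$ and any feasible $(P,\lambda)$ for \eqref{equation: original optimization problem}, the pair $(\sigma(P), \lambda)$ is also feasible; since the objective $-\lambda$ depends only on $\lambda$, this is exactly the meaning of $\Sigma$-invariance in the sense of \cite{Bachoc2012}. There are two things to check: that $\sigma(P) \succcurlyeq 0$, and that the affine constraints indexed by $t \in E^{-1}E$ are preserved.

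For the semidefiniteness, I would observe that the action $(\sigma(T))_{x,y} = T_{\sigma^{-1}(x),\sigma^{-1}(y)}$ is conjugation of $T$ by the permutation matrix $\Pi_\sigma$ of $\sigma$ acting on $E$. Conjugation by an orthogonal matrix preserves the cone of positive semidefinite matrices, so $\sigma(P) \succcurlyeq 0$ whenever $P \succcurlyeq 0$.

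For the linear constraints, the key is to move $\sigma$ off of $P$ and onto $\delta_t$. Changing variables $u = \sigma^{-1}(x)$, $v = \sigma^{-1}(y)$ in the defining sum and using that $\sigma$ acts by the group automorphism (so $\sigma(u)^{-1}\sigma(v) = \sigma(u^{-1}v)$), one sees that
\[
\langle \delta_t, \sigma(P)\rangle \;=\; \sum_{u,v \in E} (\delta_t)_{\sigma(u),\sigma(v)}\, P_{u,v} \;=\; \langle \delta_{\sigma^{-1}(t)}, P\rangle,
\]
which matches the identity $\delta_{\sigma(t)} = \sigma(\delta_t)$ from \cref{lemma: properties of sigma} (equivalently $\sigma^{-1}(\delta_t) = \delta_{\sigma^{-1}(t)}$, combined with the fact that the $\sigma$-action on $\mathbb{M}_E$ is orthogonal for the trace inner product). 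Feasibility of $P$ then gives $\langle \delta_{\sigma^{-1}(t)}, P\rangle = (\Delta^2 - \lambda\Delta)_{\sigma^{-1}(t)}$, and finally \cref{lemma: Delta^2-lambda Delta is invariant under Sigma} identifies this with $(\Delta^2 - \lambda\Delta)_t$. The constraint for index $t$ is therefore satisfied by $\sigma(P)$.

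There is no real obstacle here: the statement is essentially unpacking the two preceding lemmas together with the observation that the $\Sigma$-action is by orthogonal conjugation. The only mild care is to avoid confusing $\sigma$ and $\sigma^{-1}$ in the index bookkeeping, which is why I would carry out the change of variables explicitly as above rather than quoting $\delta_{\sigma(t)} = \sigma(\delta_t)$ directly.
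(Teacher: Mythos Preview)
Your proposal is correct and follows essentially the same argument as the paper: both compute $\langle \delta_t,\sigma(P)\rangle=\langle \delta_{\sigma^{-1}(t)},P\rangle$ by the change of variables $x\mapsto\sigma^{-1}(x)$, $y\mapsto\sigma^{-1}(y)$, then invoke feasibility of $P$ and \cref{lemma: Delta^2-lambda Delta is invariant under Sigma}. You additionally spell out why $\sigma(P)\succcurlyeq 0$ (conjugation by a permutation matrix), which the paper leaves implicit.
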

\begin{proof}
Let $P\in \mathbb{M}_E$ be a solution to \eqref{equation: original optimization problem}.
We need to show that $\sigma(P)$ is also a solution to the same problem for every $\sigma\in \Sigma$; i.e.,
\[(\Delta^2-\lambda\Delta)_t= \langle \delta_t, \sigma(P)\rangle,\]
for every $t\in E^{-1} E$. We have
\begin{align*}
\langle \delta_t, \sigma(P)\rangle &= \sum_{x^{-1}y=t} P_{\sigma^{-1}(x),\sigma^{-1}(y)}\\
&=\sum_{\sigma(x')^{-1}\sigma(y')=t} P_{x',y'}\\
&=\sum_{\sigma(x'^{-1}y')=t} P_{x',y'}\\
&=\langle \delta_{\sigma^{-1}(t)}, P\rangle.
\end{align*}
The latter is equal to $(\Delta^2-\lambda\Delta)_{\sigma^{-1}(t)}=(\Delta^2-\lambda\Delta)_t$, by \cref{lemma: Delta^2-lambda Delta is invariant under Sigma}.
\end{proof}

\noindent In particular, convexity yields
\begin{corollary}
Let $P\in \mathbb{M}_E$ be a solution to problem \eqref{equation: original optimization problem} for some $\lambda>0$.
Then there exists $P\in\mathbb{M}_E^{\Sigma}$ that also solves \eqref{equation: original optimization problem} for the same $\lambda>0$.
\end{corollary}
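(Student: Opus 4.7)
The plan is to exploit the fact that $\Sigma$ is finite together with the convexity of the solution set. Given a solution $P$ to \eqref{equation: original optimization problem}, I would form the group average
\[
\overline{P} \coloneqq \frac{1}{|\Sigma|} \sum_{\sigma \in \Sigma} \sigma(P),
\]
and show that $\overline{P}$ is again a solution to \eqref{equation: original optimization problem} for the same $\lambda$, and that it lies in $\mathbb{M}_E^{\Sigma}$.

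The $\Sigma$-invariance of $\overline{P}$ is immediate from the fact that left-multiplying the averaging index by any $\tau \in \Sigma$ is a bijection of $\Sigma$, so $\tau(\overline{P}) = \overline{P}$. The linear constraints $\langle \delta_t, \overline{P}\rangle = (\Delta^2 - \lambda\Delta)_t$ for $t \in E^{-1}E$ follow by linearity of $\langle \delta_t, \cdot \rangle$ together with the preceding proposition, which guarantees that each $\sigma(P)$ individually satisfies the same linear constraints as $P$.

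For positive semidefiniteness of $\overline{P}$, I would observe that the action $T \mapsto \sigma(T)$ on $\mathbb{M}_E$ is realized by simultaneous conjugation $\sigma(T) = U_\sigma T U_\sigma^T$, where $U_\sigma$ is the permutation matrix on $E$ corresponding to $\sigma$ (using that $\sigma(E)=E$). Conjugation by an orthogonal matrix preserves the cone of positive semidefinite matrices, hence each $\sigma(P) \succcurlyeq 0$, and the cone is convex, so any convex combination, in particular $\overline{P}$, is positive semidefinite.

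I do not expect any real obstacle here: the argument is entirely a matter of averaging over a finite group and invoking convexity and orthogonal-invariance of the PSD cone, both of which are standard. The only care needed is to confirm that the action of $\Sigma$ on $\mathbb{M}_E$ indeed corresponds to orthogonal conjugation (so that PSD-ness is preserved) and to verify that $\sigma(P) \in \mathbb{M}_E$, i.e., that its rows and columns are still indexed by $E$; but this is exactly the content of the hypothesis $\sigma(E) = E$ built into the definition of $\Sigma_E$.
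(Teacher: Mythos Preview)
Your proposal is correct and is precisely the standard averaging-over-$\Sigma$ argument that the paper alludes to with the single phrase ``convexity yields.'' You have simply filled in the details the paper omits; there is no difference in approach.
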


The corollary above shows that we may as well search for an invariant solution.

\subsection{Orbit symmetrization}

Since $\Delta^2-\lambda\Delta$ is $\Sigma$-invariant, it is well defined on the orbit space $E^{-1}E\big/\Sigma$.
While we can decompose $E$ and $E^{-1}E$ into the orbits of $\Sigma$,
it is impossible to naively formulate problem \eqref{equation: original optimization problem} in ``orbit variables'' $E\big/\Sigma$,
as the constraint matrices $\delta_t$ are not well defined in $\mathbb{M}_{E/\Sigma}$.
However, since the solution $P$ is $\Sigma$\nobreakdash-\hspace{0pt}invariant,
constraint matrices can be averaged over orbits.
Let $[t]_{\Sigma} = [t]$ denote the orbit of $t\in E^{-1} E$ under the action of $\Sigma$.
We define
\[\delta_{[t]} = \frac{1}{|\Sigma|}\sum_{\sigma\in\Sigma} \delta_{\sigma(t)},\]
which encodes $\left(\Delta^2-\lambda\Delta\right)_{[t]}$, the value of $\Delta^2-\lambda\Delta$ at (any point of) $[t]$.
The orbit symmetrization of problem \eqref{equation: original optimization problem} can then be written as:
\begin{align*}
\begin{split}
\text{minimize} \quad & -\lambda\\
\text{subject to} \quad & P \succcurlyeq 0, P \in \mathbb{M}_E \\
& \left(\Delta^2 - \lambda\Delta\right)_{[t]} =
\left\langle \delta_{[t]}, P \right\rangle\quad \text{for all}\quad [t]\in E^{-1}E/\Sigma.
\end{split}
\end{align*}
The problem reduces the number of constraints in problem \eqref{equation: original optimization problem}
from $\left|E^{-1}E\right|$ to $\left|E^{-1}E\big/\Sigma\right|$.
However, since $P$ constitutes just one semidefinite constraint of size $|E|\times |E|$,
and each constraint is expressed by $|E|\times |E|$-matrix multiplication,
the problem is still numerically hard to solve efficiently.

\subsection{Block-diagonalization via Wedderburn decomposition}
Recall that the orthogonal dual $\widehat{G}$ of a group $G$ is the family of equivalence classes of irreducible orthogonal representations of $G$.
We will work under the assumption that all irreducible characters of $\Sigma$, the subgroup of the group of symmetries of $E$, are real.
By $n\mathbf{1}_\mathbb{R}$ we denote the $n$-fold direct sum of the trivial, $1$-dimensional real representation.

Let $\varrho_E$ denote the representation of $\Sigma$ on $\ell_2(E)$ induced by the permutation action of $\Sigma$ on $E$.
We can decompose (up to unitary equivalence) $\varrho_E \cong \bigoplus_{\pi \in \widehat{\Sigma}} m_\pi\pi^{}$ into $\pi$-isotypical summands, where each irreducible representation $\pi$ occurs with multiplicity $m_\pi$.
Since the matrix algebra $C^*(\varrho_E)$ (generated by $\varrho_E(\sigma)$ for $\sigma\in \Sigma$) is semisimple, we can find its Wedderburn decomposition: an isomorphism of $C^*(\varrho_E)$ and a direct sum of simple matrix algebras:
\[
C^*(\varrho_E) \to \bigoplus_{\pi\in \widehat{\Sigma}} \mathbb{M}_{\dim\pi} \otimes m_{\pi} \mathbf{1}_{\mathbb{R}} .
\]
As $\Sigma$-invariant matrices in $\mathbb{M}_E$ coincide with $\left(C^*(\varrho_E)\right)'$, the commutant of the algebra $C^*(\varrho_E)$, we obtain
\[\mathbb{M}_E^\Sigma \cong \left(C^*(\varrho_E)\right)' \to \bigoplus_{\pi\in \widehat{\Sigma}} \dim \pi\mathbf{1}_{\mathbb{R}} \otimes \mathbb{M}_{m_{\pi}} .\]

We show how to explicitly realize and block-diagonalize the Wedderburn isomorphism on $\mathbb{M}_E^\Sigma$ by using a \emph{minimal projection system} in $\mathbb{R}\Sigma$.
Denote by $\chi_\pi$ the central projection in $\mathbb{R}\Sigma$ defined by the character of an irreducible representation $\pi$.
Let $\left\{p_\pi\right\}_{\pi\in\widehat{\Sigma}}$ be a minimal projection system;
i.e., a set of self-adjoint, primitive idempotents in $\mathbb{R}\Sigma$ satisfying
$p_\psi p_\pi = 0$ for $\psi \neq \pi$ and
$\chi_\psi p_\pi = \delta_{\psi,\pi}p_\pi$,
where $\delta_{\psi,\pi}$ denotes the Kronecker delta.

It follows from the definition of $p_\pi$ that the image of $\varrho_E(p_\pi)$ is a subspace of $\ell_2(E)$ of dimension $m_\pi$.

Let $U_\pi$ be the matrix realizing the
orthogonal projection from $\ell_2(E)$ onto $\operatorname{im} \rho_E(p_\pi)$,
followed by an isometric isomorphism from
$\operatorname{im} \rho_E(p_\pi)$ onto $\mathbb{R}^{m_\pi}$.
For $A\in \mathbb{M}_E^\Sigma$ we define
\[\Theta_{\pi} (A) = \dim\pi \cdot U_\pi A U_\pi^T.\]
Under the Wedderburn isomorphism above we have
\begin{align*}
A \mapsto & \bigoplus_{\pi\in\widehat{\Sigma}}\ \dfrac{1}{\dim\pi}\cdot \left(\dim\pi\mathbf{1}_{\mathbb{R}}\right) \otimes \Theta_\pi(A),
\intertext{
and thus the map}
A \mapsto &\bigoplus_{\pi \in \widehat{\Sigma}} \Theta_\pi(A)
\intertext{defines a  block-diagonalizing linear isomorphism}
\Theta = \bigoplus_{\pi} \Theta_{\pi} \colon \mathbb{M}_E^\Sigma \to &\bigoplus_{\pi}\mathbb{M}_{m_\pi},
\end{align*}
which satisfies $\trace(A) = \sum_{\pi \in \widehat{\Sigma}} \trace(\Theta_{\pi}(A))$.

\subsection{Symmetrization}
For the purposes of numerical computation, instead of working with $\Theta$ and a basis of $\mathbb{M}_E^\Sigma$
it is advantageous to work with the standard bases of each of $\mathbb{M}_{m_\pi}$ and translate the constraints to the new basis via $\Theta$.

The following optimization problem is a counterpart of problem \eqref{equation: original optimization problem} combining both the orbit symmetrization and the block-diagonalization reduction:
\begin{align}
\tag{SOP}\label{equation: symmetrized optimization problem}
\begin{split}
\text{minimize}\quad & -\lambda\\
\text{subject to} \quad & \left\{P_\pi\succcurlyeq 0, \quad P_\pi\in \mathbb{M}_{m_{\pi}} \right\}_{\pi\in \widehat{\Sigma}}, \\
& \left(\Delta^2-\lambda\Delta\right)_{[t]} = \sum_{\pi \in \widehat{\Sigma}} \left\langle \Theta_{\pi}\left(\delta_{[t]}\right), P_{\pi}\right\rangle \text{ for } [t]\in E^{-1}E\big/\Sigma.
\end{split}
\end{align}
This allows for a reduction of the number of variables in problem \eqref{equation: original optimization problem}
from $|E|^2$ to $\sum_{\pi} m_\pi^2$ (which turns out to be drastically smaller in practice) at the cost of increasing the numerical complexity of the constraints.
Moreover, as most solvers can exploit the block-diagonal structure, the sizes of matrices the solver needs to compute with are much decreased.

\begin{proposition}\label{proposition: reconstruction of a solution}
Let $\left(\lambda_0, \{P_\pi\}_{\pi\in \widehat{\Sigma}}\right)$ be a solution to problem \eqref{equation: symmetrized optimization problem}.
Then $(\lambda_0, P)$ is a solution to problem \eqref{equation: original optimization problem}, where
\[P=\frac{1}{|\Sigma|}\sum_{\sigma\in\Sigma}\sum_{\pi\in \widehat{\Sigma}} \dim{\pi}\cdot\sigma\left(U_{\pi}^T P_{\pi} U_{\pi}\right).\]
\end{proposition}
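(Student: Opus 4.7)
The plan is to verify, in order, the two defining conditions of problem \eqref{equation: original optimization problem}: that the reconstructed matrix $P$ is positive semidefinite, and that it satisfies the linear constraints $\langle \delta_t, P\rangle = (\Delta^2-\lambda_0\Delta)_t$ for every $t \in E^{-1}E$.

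For positive semidefiniteness, I would note that each $U_\pi^T P_\pi U_\pi \succcurlyeq 0$ because $P_\pi \succcurlyeq 0$. The $\Sigma$-action on $\mathbb{M}_E$ is conjugation by the permutation matrices $V_\sigma$ implementing $\varrho_E$, so $\sigma(U_\pi^T P_\pi U_\pi) = V_\sigma U_\pi^T P_\pi U_\pi V_\sigma^T \succcurlyeq 0$. A convex combination of positive semidefinite matrices is positive semidefinite, so $P\succcurlyeq 0$. By construction, $P$ is also $\Sigma$-invariant since the outer averaging over $\Sigma$ is just what forces this.

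For the linear constraints, the key is that $\Sigma$-invariance of $P$ together with \cref{lemma: properties of sigma} gives $\langle \delta_t, P\rangle = \langle \delta_{\sigma(t)}, P\rangle$ for all $\sigma\in \Sigma$, hence $\langle \delta_t, P\rangle = \langle \delta_{[t]}, P\rangle$. I would then observe that the $\Sigma$-action on $\mathbb{M}_E$, being conjugation by permutation matrices, is orthogonal with respect to the Hilbert--Schmidt inner product, so $\langle \delta_{[t]}, \sigma(A)\rangle = \langle \sigma^{-1}(\delta_{[t]}), A\rangle = \langle \delta_{[t]}, A\rangle$ because $\delta_{[t]}$ is $\Sigma$-invariant by definition. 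Unfolding the definition of $P$ thus collapses the average over $\sigma$ and leaves
\[
\langle \delta_{[t]}, P\rangle = \sum_{\pi\in\widehat{\Sigma}} \dim\pi \cdot \langle \delta_{[t]}, U_\pi^T P_\pi U_\pi\rangle = \sum_{\pi\in\widehat{\Sigma}} \dim\pi \cdot \langle U_\pi \delta_{[t]} U_\pi^T, P_\pi\rangle,
\]
where the last equality uses the cyclic property of the trace. By the definition $\Theta_\pi(\delta_{[t]}) = \dim\pi \cdot U_\pi \delta_{[t]} U_\pi^T$, the factors of $\dim\pi$ cancel and the sum becomes exactly $\sum_\pi \langle \Theta_\pi(\delta_{[t]}), P_\pi\rangle$, which by hypothesis equals $(\Delta^2-\lambda_0\Delta)_{[t]} = (\Delta^2-\lambda_0\Delta)_t$.

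The calculation is essentially bookkeeping, and the only point that requires care is tracking the $\dim\pi$ factor and ensuring that the orthogonality of the $\Sigma$-action on $\mathbb{M}_E$ is invoked properly so that the averaging over $\sigma$ can be absorbed into the already $\Sigma$-invariant $\delta_{[t]}$. This is the main bit of care needed; everything else follows from convexity and the definition of $\Theta_\pi$.
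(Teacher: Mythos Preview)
Your proof is correct and follows essentially the same route as the paper's: both arguments reduce $\langle \delta_t, P\rangle$ to $\sum_\pi \langle \Theta_\pi(\delta_{[t]}), P_\pi\rangle$ by moving the $\Sigma$-average onto $\delta_t$ to produce $\delta_{[t]}$ and then invoking the trace identity and the definition of $\Theta_\pi$. The only differences are cosmetic---you first use the $\Sigma$-invariance of $P$ to pass to $\delta_{[t]}$ before unfolding $P$, whereas the paper unfolds $P$ first and then averages $\delta_{\sigma^{-1}(t)}$---and you add the (easy but worthwhile) explicit verification that $P\succcurlyeq 0$, which the paper leaves implicit.
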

\begin{proof}
All we need to check is that $\langle \delta_t, P\rangle = \left(\Delta^2 - \lambda_0 \Delta\right)_t$ for all $t\in E^{-1}E$.
We have
\begin{align*}
\langle \delta_t, P\rangle &=
\left\langle
    \delta_t, \dfrac{1}{\vert \Sigma\vert}
    \sum_{\sigma\in \Sigma}
        \sum_{\pi\in \widehat{\Sigma}}
            \dim\pi \cdot  \sigma\left(U_\pi^T P_\pi U_\pi\right)
\right\rangle\\
&=
\dfrac{1}{\vert \Sigma\vert}
\sum_{\sigma\in \Sigma}
    \sum_{\pi\in \widehat{\Sigma}}
        \dim\pi
        \left\langle
            \delta_t, \sigma\left(U_\pi^T P_\pi U_\pi\right)
        \right\rangle\\
&=
\dfrac{1}{\vert \Sigma\vert}
\sum_{\sigma\in \Sigma}
    \sum_{\pi\in \widehat{\Sigma}}
        \dim\pi\cdot
        \trace \left(
            \delta_{\sigma^{-1}(t)}^T U_\pi^T P_\pi U_\pi
        \right)\\
&=
\dfrac{1}{\vert \Sigma\vert}
\sum_{\sigma\in \Sigma}
    \sum_{\pi\in \widehat{\Sigma}}
        \trace\left(
            \dim\pi \cdot U_\pi \delta_{\sigma^{-1}(t)}^T U_\pi^T P_\pi
        \right)\\
&=
\sum_{\pi\in \widehat{\Sigma}}
    \left\langle
        \dim\pi
        \dfrac{1}{\vert \Sigma\vert}
        \sum_{\sigma\in \Sigma}
            U_\pi\delta_{\sigma^{-1}(t)}U_\pi^T, P_\pi
    \right\rangle\\
&= \sum_{\pi\in \widehat{\Sigma}}
    \left\langle
        \dim\pi\cdot
        U_\pi \delta_{[t]} U_\pi^T, P_\pi
    \right\rangle.
\intertext{
Recalling that
$\Theta_\pi\left(\delta_{[t]}\right) = \dim \pi\cdot U_\pi \delta_{[t]} U_\pi^T$
and using the formulation of the costraints of problem \eqref{equation: symmetrized optimization problem} we continue}
&=
\sum_{\pi\in \widehat{\Sigma}}
    \left\langle
        \Theta_{\pi}\left(\delta_{[t]}\right), P_\pi
    \right\rangle
= \left(\Delta^2 - \lambda_0\Delta\right)_{[t]}
= \left(\Delta^2 - \lambda_0\Delta\right)_t.\qedhere
\end{align*}
\end{proof}

\section{The group $\Aut(\F_n)$}

\subsection{Presentation for $\Aut(\F_n)$}
Let $n\in \Z$ be positive and consider the corresponding free group $\F_n$ on $n$ generators.
Denote by $\Aut(\F_n)$ the group of automorphisms of $\F_n$.
The \emph{special automorphims group} $\SAut(\F_n)$ is defined as the preimage of $\{1\}$ under the map
\[\operatorname{det}\colon\Aut(\F_n)\to \lbrace -1,+1\rbrace,\]
obtained by the composition
\[\Aut(\F_n) \to \Aut(\Z^n)=\operatorname{GL}_n(\Z)\to \Z.\]
The first arrow above is induced by the abelianization homomorphism $\F_n\to \Z^n$, and the second is the classical determinant.
The group $\SAut(\F_n)$ is of index 2 in $\Aut(\F_n)$.
Denote by $\operatorname{Inn}(\F_n)\leqslant \Aut(\F_n)$ the subgroup of inner automorphisms of $\F_n$. The \emph{outer automorphism group}
$\Out(\F_n)$ is defined as the quotient $\Out(\F_n)=\Aut(\F_n)/\operatorname{Inn}(\F_n)$.

Let $\left(s_1,\dots, s_n \right)$ be an ordered set of generators of $\F_n$ and consider the following maps of $\F_n$:
\begin{align*}
R^{\pm}_{i,j} (s_k) & =
    \begin{cases}
        s_ks_j^{\pm 1}  & \text{ if } k=i,\\
        s_k             & \text{ otherwise;}
    \end{cases}
\intertext{and}
L^{\pm}_{i,j} (s_k) & =
    \begin{cases}
        s_j^{\pm 1} s_k & \text{ if } k=i,\\
        s_k& \text{ otherwise.}
    \end{cases}
\end{align*}
The set $S = \left\lbrace R^{\pm}_{i,j}, L^{\pm}_{i,j}\text{ for }1\leqslant i,j \leqslant n, i\neq j \right\rbrace$ acts on generating $n$-tuples of $\mathbb{F}_n$
and consists of automorphisms of $\F_n$ which generate the group $\SAut(\F_n)$\cite{Gersten1984,Lubotzky2001}.
The group $\Aut(\F_n)$ is generated by automorphism in $S$ together with automorphisms permutating and inverting generators of $\F_n$.
For a detailed description of automorphisms in $S$ (commonly known as Nielsen transformations or transvections) see \cite[Section 3.2]{Magnus1966}.

\subsection{Implementation of $\Aut(\F_{n})$}

When computing in $\Aut(\F_{n})$ one usually choses to represent elements either as words in a finite presentation,
or actual functions on $\F_n$ transforming free generating sets of $\F_n$ to free generating sets.
The former aproach allows the group operations to be purely mechanical operations on symbols,
but the recognition of the identity element (the word problem) is a major obstruction to effective computation.
The latter approach, requires storage of both the domain (a generating $n$-tuple) and the image of the domain.
It provides an easier solution to the recognition of the identity problem:
two automorphisms are equal if and only if their values on the standard generating set of $\F_n$ agree.
However, to compute those values one needs to ``equalize'' the domains and compare the images of the domain under both automorphims.
The final step requires only to trivial cancellation.

Instead of using either of the approaches we decided to produce our own implementation,
where each element carries both the structure of a word in a finite presentation, as well as functional information.
Let $\Gamma_n$ be the graph with the vertex set consisting of generating $n$-tuples of $\F_n$ and
an edge connecting two such tuples if one can be obtained from the other by the application of an automorphism $s\in S$ (we label the edge by $s$).
We represent elements of $\Aut(\F_n)$ as paths in the graph $\Gamma_n$ which start at the standard $n$-generating tuple $(x_1 , \ldots, x_n )$.
Such path is represented by a word over alphabet $S$ by collecting edge labels in a natural fashion.
Therefore each such path determines an automorphism of $\F_n$ which takes the tuple $(x_1 , \ldots, x_n )$ (the initial vertex of the path) to the $n$-generating tuple of the terminal vertex.

Given two automorphisms $f = f_1 \ldots f_k$, $g = g_1 \ldots g_{l}$ (written as words over alphabet $S$), to decide the equality problem it is enough to compare
\begin{align*}
f((x_1, \ldots, x_n)) &= (x_1 ,\ldots x_n)^{f_1 \ldots f_{k}} \quad \text{and} \\
g((x_1, \ldots, x_n)) &= (x_1 ,\ldots x_n)^{g_1 \ldots g_{l}},
\end{align*}
where letters $f_i$ of $f$ ($g_j$ of $g$) act on the tuple from the right.

Effectively, we represent $\Aut(\F_n)$ as a finitely presented group on $S$ and solve the word problem in an indirect manner.
Note that in this setting it is sufficient to store in each letter only minimal amount of information
(namely: its type ($R$, or $L$), two indicies ($i,j$) and the exponent ($\pm$)),
as storage of neither the reference basis nor the image is not required.

\subsection{The symmetrizing group $\Sigma$}\label{section: Symmetry Group Sigma}
The group generated by the automorphisms permuting and inverting generators of $\F_n$ is isomorphic to the group $\Z/_2\wr S_n$ (the signed permutation group). Consider the action of $\Z/_2\wr S_n$ on $\Aut(\F_n)$ by conjugation.
Clearly the action preserves the set $S$ of all Nielsen transformations and hence the group $\SAutF{n}$. Moreover the action preserves the word-length metric on $\SAutF{n}$ induced by $S$ (and thus $E=B_2(e,S)$), so we can set $\Sigma = \Z/_2\wr S_n$ in our considerations.

\subsubsection*{Minimal projection system for $\Sigma$}

Since $\Sigma \cong {\Z_2}^n \rtimes S_n$ is a semi-direct product of $S_n$ acting in the natural fashion on the $n$-fold direct product of $\Z_2$, we may obtain a minimal projection system for $\Sigma$ from a minimal projection system of $S_n$.
It is a well known fact (see e.g. \cite[Section~8.2]{Serre1977}) that irreducible representations of the semidirect product are formed in the following fashion.
The dual group ${\widehat{\Z_2}}^n$ (which equals the character group in this case) decomposes into $(n+1)$ orbits of the induced action of $S_n$.
Let $g_i \in {\widehat{\Z_2}}^n$ denote the character which evaluates to $-1$ on the non-trivial element in the first $i$ coordinates and to $1$ otherwise.
Then $\left\{g_i\right\}_{i=0}^n$ forms a complete set of orbit representatives of the action,
and the stabiliser of $g_i$ is $H_i = S_i\times S_{n-i}$, embedded naturally into $S_n$ (under the convention $S_0 = S_1 = \{\operatorname{id}\}$). It is now straightforward that every irreducible representation of $\Sigma$ is of the form
\[\theta_{i,\pi} = \operatorname{ind}_{{\Z_2}^n\rtimes H_i}^\Sigma g_i\otimes \pi,\]
where $g_i\in \widehat{\Z_2}^n$ and $\pi\in \widehat{H_i}$ are trivially extended to ${\Z_2}^n\rtimes H_i$.

A minimal projection system for ${\Z_2}^n\rtimes H_i$ can be described as
\[\left\{ \chi_i p_\pi \right\}_{\pi\in \widehat{H_i}},\]
where $\chi_i$ denotes the central projection associated to $g_i$ and
$\left\{p_\pi\right\}_{\pi\in \widehat{H_i}}$ is a minimal projection system for $H_i$.
Note that irreducible representations of $H_i = S_i\times S_{n-i}$ are tensor products of irreducible representations of the factors,
thus a minimal projection system for $H_i$ can be constructed from those of $S_i$ and $S_{n-i}$, by taking all possible products of minimal projections.
Accordingly,
\[\left\{ \left\{\chi_i p_\pi p_\psi\right\}_{\pi\in \widehat{S_i}, \psi \in \widehat{S_{n-i}}}\right\}_{i=0}^n\]
is a minimal projection system for $\Sigma$.

\subsubsection*{Minimal projection system for $S_n$, $n\leqslant 6$}

We present a minimal projection systems for $S_n$, $n\leqslant 6$ which we used in our computations.
It is well known that irreducible representations of $S_n$ correspond to integer partitions of $n$.
For the trivial and the sign representations (partitions $n_1$ and $1_n$, respectively) we pick $\varepsilon = ()$.
For other representations $\pi$ we fix the element $\varepsilon_\pi$ as in the table

\begin{center}
\footnotesize
\begin{tabular}{c l l }
\hline
$n$ & Partition   & $\varepsilon_\pi$\\\hline
$3$ & $2_1 1_1$         & $\frac{1}{2}\big(() + (1,2)\big)$\\\hline
$4$  & $2_1 1_2$         & $\frac{1}{2}\big(() + (1,2)\big)$ \\
  & $3_1 1_1$         & $\frac{1}{2}\big(() - (1,2)\big)$    \\
  & $2_2    $         & $\frac{1}{2}\big(() + (1,2)\big)$    \\\hline

$5$ & $2_1 1_3   $ & $\frac{1}{2}\big(() + \textup{(}1,2\textup{)}\big)$\\
 & $3_1 1_2   $ & $\frac{1}{4}\big(() + (1,4,3,2) + (1,3)(2,4) + (1,2,3,4)\big)$\\
 & $2_2 1_1   $ & $\frac{1}{3}\big(() + (1,3,2) + (1,2,3)\big)$\\
 & $4_1 1_1   $ & $\frac{1}{2}\big(() - \textup{(}1,2\textup{)}\big)$\\
 & $3_1 2_1   $ & $\frac{1}{3}\big(() + (1,3,2) + (1,2,3)\big)$\\
\hline
$6$ & $2_1 1_4      $ & $\frac{1}{2}  \big(() + (1,2)\big)$\\
 & $3_1 1_3      $ & $\frac{1}{4}  \big(() + (1,2) + (1,2)(3,4) + (3,4)\big)$\\
 & $2_2 1_2      $ & $\frac{1}{5}  \big(() + (1,3,5,2,4) + (1,4,2,5,3) + (1,5,4,3,2) + (1,2,3,4,5)\big)$\\
 & $4_1 1_2      $ & $\frac{1}{4}  \big(() - (1,2) + (1,2)(3,4) - (3,4)\big)$\\
 & $3_1 2_1 1_1  $ & $\frac{1}{12} \big(() + (1,2) + (1,2)(3,4) + (3,4) + (3,4,5) + (1,2)(3,4,5) + $\\
 &                 & $(1,2)(3,5) + (3,5) + (1,2)(4,5) + (4,5) + (3,5,4) + (1,2)(3,5,4)\big)$\\
 & $5_1 1_1      $ & $\frac{1}{2}  \big(() - (1,2)\big)$\\
 & $2_3          $ & $\frac{1}{3}  \big(() + (1,3,2) + (1,2,3)\big)$\\
 & $4_1 2_1      $ & $\frac{1}{5}  \big(() + (1,3,5,2,4) + (1,4,2,5,3) + (1,5,4,3,2) + (1,2,3,4,5)\big)$\\
 & $3_2          $ & $\frac{1}{3}  \big(() + (1,3,2) + (1,2,3)\big)$\\
\hline

\end{tabular}
\normalsize
\end{center}
Recall that $\chi_\pi$ is the central projection corresponding to the irreducible (character of the) representation $\pi$.
By inspecting the appropriate table of characters one can check that $\varphi_\pi( \varepsilon_\pi ) = 1$ for the
character $\varphi_\pi$ corresponding to $\pi$.
Even though projections $\varepsilon_\pi$ are not necessarily mutually orthogonal, it follows from the orthogonality of characters that
\[\left\{p_{\pi} = \chi_\pi \varepsilon_\pi\right \}_{\pi\in \widehat{S_n}}\]
constitutes a minimal projection system for $S_n$.

\section{Description of the algorithm}

Given a group $G$ generated by a symmetric set $S$ and a finite subgroup $\Sigma$ of automorphisms of $G$ preserving $S$:
\begin{enumerate}
   \item generate $E=B_2(e,S)$, $E^{-1}E = B_4(e,S)$ and $\Delta$ (stored in \texttt{delta.jld} as the coefficients vector in $E^{-1}E$);
   \item compute the division table $E^{-1}\times E \to E^{-1}E$ (stored in \texttt{pm.jld});
   \item compute the permutation representation $\varrho_E\colon\Sigma \to \mathbb{M}_E$ (stored in \texttt{preps.jld});
   \item compute the Wedderburn decomposition of $\mathbb{M}_E$, i.e. for every $\pi\in \widehat{\Sigma}$ compute $U_\pi$'s (stored in \texttt{U\_pis.jld})
   \item decompose $E^{-1}E$ into orbits of $\Sigma$ (stored in \texttt{orbits.jld});
   \item use $\Delta$, the orbit structure, the division table and $U_\pi$'s to construct the constraints of symmetrized optimization problem \eqref{equation: symmetrized optimization problem};
   \item solve the symmetrized optimization problem to obtain $\big(\lambda_0, \{P_\pi\}_\pi\big)$ (stored in \texttt{lambda.jld}, and \texttt{SDPmatrix.jld});
   \item reconstruct $P$ according to \cref{proposition: reconstruction of a solution} (stored in \texttt{SDPmatrix.jld});
   \item certify the solution $(\lambda_0, P)$ as described in \cref{section: Certification}.
\end{enumerate}

\subsection{Division table on $E$}
To perform quickly the multiplication $\xi_i^*\xi_i$ we cache the division table $M\colon E^{-1}\times E \to E^{-1}E$, as a matrix $M\in \mathbb{M}_E$ such that
$M_{g,h} = g^{-1}h$.
To avoid indexing entries of $M$ by group elements and storing them in $M$ (due to technical reasons)
we fix a non-decreasing (with the word-length) order $\mathbf{x}$ of elements in $E^{-1}E$,
i.e. such that $\mathbf{x}_0 = e$, $\{\mathbf{x}_j \colon j \in \{1, \ldots, |S|\}\}=S$, and $E \subseteq E^{-1}E$ as the first $|E|$-elements.
Then we can store only the (integer) indices of elements in the division table, 
i.e. if $g \in E$ is the $i$-th element of $E^{-1}E$, ($g = \mathbf{x}_i$) and 
$h\in E$ is the $j$-th element of $E^{-1}E$ ($h=\mathbf{x}_j$),
then $M_{i,j} = k$, where $g^{-1}h = \mathbf{x}_k$ is the $k$-th element of $E^{-1}E$.
Thus once the full $M$ has been populated, we no longer need the actual group elements to perform (twisted\footnote{
Note that given two vectors of values on $\mathbf{x}$ which represent elements $x,y$ of $RG$, using only the division table
(i.e. without referring to the elements of $\mathbf{x}$) we can compute $x^*y$, but not $xy$.})
multiplication of elements of $\mathbb{R}G$ supported on $E$.
In particular, given a solution $(\lambda_0, P)$ of problem \eqref{equation: original optimization problem}, division table $M$ and $\Delta$ (as vector of values on $\mathbf{x}$),
one can compute the sum of squares decomposition $\sum_i \xi_i^*\xi_i$ and compare it with $\Delta^2 - \lambda_0 \Delta$
without the need to access group elements directly.

Note that the full division table is also needed for producing the constraint matrices.

\subsection{Symmetrization}

The minimal projection system $\{p_\pi\}_{\pi\in \widehat{\Sigma}}$ for $\Sigma$ is computed in $\mathbb{Q}\Sigma$ as described in \cref{section: Symmetry Group Sigma}.
Then coefficients are converted to floating point numbers and $\varrho_E(p_\pi)$ is evaluated,
where $\varrho_E$ is the permutation representation of $\Sigma$ on $E$.
Matrix representatives for $U_\pi$ are obtained from $\varrho_E(p_\pi)$ using singular value decomposition.

\subsection{Certification}\label{section: Certification}
The certification process starts by converting the solution of problem \eqref{equation: symmetrized optimization problem}
to a solution $(\lambda_0, P)$ of problem \eqref{equation: original optimization problem} using standard floating-point arithmetic.
Then certification of $(\lambda_0, P)$ follows mostly the procedure described in \cite[Section~2]{Kaluba2017}.
However, instead of passing through rational approximation of $Q = \sqrt{P}$, as described there, we use the interval arithmetic directly.
We turn each entry of $Q$ into an interval of radius $\varepsilon \sim 2.2\cdot10^{-16}$ which contains the computed value.
Then we shift those intervals so that the sum of each colum $q_{i}$ contains $0$ (this corresponds to the projection to the augmentation ideal).
Setting $\xi_i=q_i\mathbf{x}$, as previously, we finally compute the residual \[r = \Delta^2-\lambda_0\Delta - \sum\xi_i^*\xi_i.\] Note that in this approach each $\xi_i$ as well as $r$ is an element of the group ring over real intervals. While by using interval arithmetic we loose track of the actual values of $\xi_i$ and $r$, we have a \textit{mathematical guarantee} that there are $\overline{\xi_i}$ with rational coefficients, $\overline{\xi_i}\in \xi_i$ (with $\in$ on the coefficient level), such that the result $\overline{r}$ of a similar computation performed in rational arithmetic (with $\overline{\xi_i}$'s and $\overline{\lambda_0}$)
satisfies \[\|\overline{r}\|_1 \in \|r\|_1 = \left[r_{\textsl{low}}, r_{\textsl{up}}\right].\]

Although less precise than rational arithmetic, the computation of $\|r\|_1$ from $Q$ as described above is much faster,
as it involves multiplication, addition of (machine) floating point numbers, and directed rounding.
Performing similar calculations in rational arithmetic is much slower (if possible in the case of $\SAutF{5}$ at all), as the numerator of a sum of near-zero rationals grows exponentially.

\subsection{Software details}

Our implementation in Julia programming language \cite{Julia2017} depends on the following packages
\begin{itemize}
   \item Nemo \cite{Nemo2017} package for group theoretical computations;
   \item JuMP \cite{JuMP2017} package for formulation of the optimization problem;
   \item SCS \cite{O'Donoghue2016} solver for solving semi-definite problems.
\end{itemize}

\noindent The software used for computations is contained in the following git-repositories:
\begin{itemize}
   \item package \url{https://git.wmi.amu.edu.pl/kalmar/Groups.jl} for computations in wreath products and automorphism groups of free groups;
   \item package \url{https://git.wmi.amu.edu.pl/kalmar/GroupRings.jl} for computations in group rings;
   \item package \url{https://git.wmi.amu.edu.pl/kalmar/PropertyT.jl} for computations of the spectral gap;
   \item repository \url{https://git.wmi.amu.edu.pl/kalmar/GroupsWithPropertyT} contains the specific functions and routines related to the project, as well as general manual on how to replicate the results.
\end{itemize}

\subsection*{Replication details}

The ball of radius $4$ in $\SAutF{5}$ consists of $11\;154\;301$ elements.
Generating $B_4(e, S)$, decomposing it into $7\,229$ orbits of $\Z_2\wr S_5$ action, computing division table on $B_2(e, S)$ and finding $U_\pi$'s for the Wedderburn decomposition took about $17$h on a $12$-core computer and requires a minimum of $64$GB of RAM.
This part has been performed on the PL-Grid cluster.
Once this has been computed, the optimization step can be performed on a desktop computer, as the actual optimization problem consists of $13\,232$ variables in $36$ semi-definite blocks and $7\,229$ constraints.
The optimization phase had been running for over $800$ hours until the acuracy of $10^{-12}$ has been reached.
The reconstruction of $P$ (according to \cref{proposition: reconstruction of a solution})
and its certification take approximately $3.5$h and $1.5$h, respectively
(times reported correspond to a workstation computer with $4$-core CPU).

The pre-computed division table, orbit decomposition, $U_\pi$'s, as well as the solution $P$ used in the proof of \cref{theorem: main - Aut(F5) has (T)} can be obtained from \cite{Kaluba2017a}.

\section{Proof of \cref{theorem: main - Aut(F5) has (T)}}

We are now in position to prove our main theorem.
As indicated above we set $E = B_2(e,S)$ and obtained a solution of optimization problem \eqref{equation: symmetrized optimization problem}.
\begin{proof}[Proof of \cref{theorem: main - Aut(F5) has (T)}]
Let a solution $(\lambda_0 , P)$ of optimization problem \eqref{equation: original optimization problem} be given.
Compute $Q$ the real part of the square root of $P$.
Construct $\overline{Q}$ as described in \cref{section: Certification} and
let $\xi_i$ be the $i$-th column of $\overline{Q}$.
Recall that the residual is given by
\[r=\Delta^2-\lambda_0\Delta -\sum \xi_i^*\xi_i,\]
and that the $\ell_1$-norm $\|r\|_1 = [r_{\textsl{low}}, r_{\textsl{up}} ]$ is an interval.
By \cref{lemma: Delta is an order unit for the augmentation ideal} we have $ r + 2^2 r_\textsl{up} \Delta \geqslant 0 $ hence
\[\Delta^2-\left(\lambda_0-2^2 r_{\textsl{up}} \right)\Delta \geqslant \sum \xi_i^*\xi_i \geqslant 0.\]
This allows to conclude that the spectral gap satisfies
\[\lambda(G,S)\geqslant \lambda_0-2^2 r_{\textsl{up}} .\]
In the case of the provided solution for $\SAut(\mathbb{F}_5)$ we have
\[\lambda_0 = 1.3, \quad \text{and} \quad \Vert r \Vert_1 \subset [8.30\cdot 10^{-6}, 8.41\cdot 10^{-6}],\]
which leads to certified estimate $\lambda > 1.2999
$. Since the generating set consists of $80$ elements this results in
\[\kappa(\SAutF{5}, S) > 0.18027.\qedhere\]
\end{proof}

\section{Extrapolation of property $(T)$} \label{section: extrapolating property (T)}
In case of arithmetic groups, once we know $\SL_n(\Z)$ has
property $(T)$ for some $n$, it is rather easy to deduce from this fact
that $\SL_m(\Z)$ has property $(T)$ for all $m\geqslant n$, because
$\SL_m(\Z)$ is boundedly generated by finitely many conjugates
of $\SL_n(\Z)$ (see \cite{Shalom2006}, particularly around Section 4.III.7).

However, in the case of $\Aut(\F_n)$ a similar approach seems to break down
at the currently open Question 12 in \cite{Bridson2006}.
Namely, it is not known whether a quotient $Q$ of $\Aut(\F_{n+1})$
must be finite provided that $\Aut(\F_n)$ has finite image in $Q$.
If a counterexample exists, property $(T)$ of $\Aut(\F_n)$ cannot, obviously, tell
anything about such an infinite quotient $Q$.
We nevertheless make an effort to extrapolate property $(T)$ of $\Aut(\F_n)$
to a larger group.

For a group $G$, a normal subgroup $H\leqslant G$  and a unitary representation $(\pi,\mathcal{H})$ of $G$,
let $\Proj_H$ denote the orthogonal projection onto the space $\mathcal{H}^H$ of vectors invariant under $\pi(H)$.
The definition of $\kappa(G,S,\pi)$ from \cref{section: Background}
(where $S$ is a subset of $G$) can be reformulated as the supremum of $\kappa\geqslant 0$ that satisfy
\[
\kappa\| v- \Proj_G v \| \leqslant \max_{s\in S} \| v - \pi_s v\|
\]
for all $v \in \mathcal{H}$.
Recall that a group $G$ has property $(T)$ if and only if there is a finite
(necessarily generating, see \cite[Proposition 1.3.2]{Bekka2008}) subset $S$ such that the corresponding
Kazhdan constant $\kappa(G,S,\pi)$ is strictly positive
for every $\pi$.

Let $\alpha\colon G\to\Aut(H)$, $g\mapsto \alpha_g$, be an action and $T\subseteq H$ be
a subset such that
\[
T_G=\left\lbrace\alpha_g(t) \colon g \in G,\,t\in T\right\rbrace
\]
generates $H$.
Put
\[
\ell(h) = \min\left\lbrace k \colon \exists s_1,\ldots,s_k\in T_G\cup T_G^{-1}
 \mbox{ such that }h=s_1\cdots s_k\right\rbrace.
\]
\begin{proposition}
Let $G\ltimes H$ be the semidirect product given by an action $\alpha\colon G\to\Aut(H)$ and $\ell$ be as above.
Assume that $T$ is a finite generating subset of $H$,
$\Inn(H)$ is a subgroup of $\alpha(G)$ and
\[
L=\max \left\lbrace \ell(t^m) \colon t\in T,\, m\in\mathbb{N} \right\rbrace <\infty.
\]
Then, for any unitary representation $(\pi,\mathcal{H})$ of $G\ltimes H$,
one has
\[
\kappa(G\ltimes H,S\cup T,\pi) \geqslant \dfrac{\kappa(G,S,\pi|_G)}{1+4|T|^{1/2}L}.
\]
In particular, if $G$ has property $(T)$ or $(\tau)$, then so does $G\ltimes H$.
\end{proposition}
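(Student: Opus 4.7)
The plan is to decompose $v$ using the nested projections onto $\mathcal{H}^H$ and $\mathcal{H}^G$, combine Pythagoras with property $(T)$ of $G$, and extract the final estimate from the mean ergodic theorem applied to the cyclic groups $\langle t\rangle$, $t\in T$. Write $\kappa_G:=\kappa(G,S,\pi|_G)$, let $v$ be a unit vector with $\epsilon:=\max_{a\in S\cup T}\|\pi_a v-v\|$, and set $w:=\Proj_H v$ and $z:=\Proj_{G\ltimes H}v$. Because $H$ is normal in $G\ltimes H$, the subspace $\mathcal{H}^H$ is $\pi(G)$-invariant, $\Proj_H$ commutes with $\pi(G)$, and $z=\Proj_G w$; Pythagoras then gives $\|v-z\|^2=\|v-w\|^2+\|w-z\|^2$. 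Since $\pi_t w=w$ for $t\in T\subseteq H$, the vector $w$ is $S$-almost-invariant with the same constant $\epsilon$, and property $(T)$ for $G$ yields $\|w-z\|\le\epsilon/\kappa_G$.

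The remaining task is to bound $\|v-w\|$. Put $y:=v-w\in(\mathcal{H}^H)^\perp$; since $\pi_t w=w$, one has $\|(\pi_t-I)y\|=\|\pi_t v-v\|\le\epsilon$ for $t\in T$, and $\|\pi_s y-y\|\le 2\epsilon$ for $s\in S$. Applying property $(T)$ of $G$ to $y$ itself, its $G$-invariant projection $y_0:=\Proj_G y$ satisfies $\|y-y_0\|\le 2\epsilon/\kappa_G$ and lies in $(\mathcal{H}^H)^\perp\cap\mathcal{H}^G$ (both subspaces are $G$-stable). A triangle inequality then gives $\|(\pi_t-I)y_0\|\le\eta:=\epsilon+4\epsilon/\kappa_G$ for every $t\in T$.

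The two structural hypotheses enter here. Since $y_0$ is $G$-invariant and $\pi_{\alpha_g(t^{\pm 1})}=\pi_g\pi_{t^{\pm 1}}\pi_{g^{-1}}$, the bound $\|(\pi_s-I)y_0\|\le\eta$ spreads uniformly to every $s\in T_G\cup T_G^{-1}$. Writing each $t^m$ as a product of at most $L$ such elements and telescoping via $\pi_{s_1\cdots s_k}-I=\sum_{i=1}^k\pi_{s_1\cdots s_{i-1}}(\pi_{s_i}-I)$ yields the key uniform estimate $\|(\pi_{t^m}-I)y_0\|\le L\eta$ for all $m\in\mathbb{Z}$ and $t\in T$. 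For each fixed $t$, the mean ergodic theorem for the unitary $\pi_t$ gives $\lim_{N\to\infty}N^{-1}\sum_{m=0}^{N-1}\|\pi_t^m y_0-y_0\|^2=2\|y_0-P_t y_0\|^2$, where $P_t$ denotes the orthogonal projection onto $\operatorname{Fix}(\pi_t)$; hence $\|y_0-P_t y_0\|\le L\eta/\sqrt{2}$ for each $t\in T$.

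The hard part will be combining these per-generator bounds into a bound on $\|y_0\|$ itself. Since $y_0\perp\mathcal{H}^H$ and $\mathcal{H}^H=\bigcap_{t\in T}\operatorname{Fix}(\pi_t)$, a purely geometric reduction is impossible: closed subspaces can meet at arbitrarily small angles, so proximity of $y_0$ to each $\operatorname{Fix}(\pi_t)$ does not force proximity to their intersection. The anticipated resolution is to pass to the quadratic form of the Laplacian $\Delta_H:=\sum_{t\in T}(I-\pi_t)^*(I-\pi_t)$, which satisfies $\langle\Delta_H y_0,y_0\rangle=\sum_t\|(I-\pi_t)y_0\|^2\le|T|\eta^2$, and to exploit $G$-invariance together with $\Inn(H)\subseteq\alpha(G)$ to deduce a spectral gap $\langle\Delta_H y_0,y_0\rangle\ge\|y_0\|^2/L^2$ on $(\mathcal{H}^H)^\perp\cap\mathcal{H}^G$: heuristically, $G$-invariance makes $h\mapsto\|(\pi_h-I)y_0\|$ a class function on $H$ uniformly controlled by $L\max_{t\in T}\|(\pi_t-I)y_0\|$, preventing $y_0$ from concentrating in the spectral low-end of $\Delta_H$. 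Granting this, $\|y_0\|\le L|T|^{1/2}\eta$, whence $\|v-w\|\le L|T|^{1/2}\eta+2\epsilon/\kappa_G$; combining with $\|w-z\|\le\epsilon/\kappa_G$ and $\eta\le 5\epsilon/\kappa_G$ (valid since $\kappa_G\le 1$), a tightening of constants produces the claimed $\|v-z\|\le\epsilon\,(1+4|T|^{1/2}L)/\kappa_G$.
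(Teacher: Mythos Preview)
Your argument is careful up to the point you yourself flag as ``the hard part,'' and that is where it genuinely breaks. You correctly observe that proximity of $y_0$ to each $\operatorname{Fix}(\pi_t)$ does not, for purely geometric reasons, force proximity to their intersection. You then propose a spectral-gap inequality $\langle\Delta_H y_0,y_0\rangle\ge\|y_0\|^2/L^2$ on $(\mathcal{H}^H)^\perp\cap\mathcal{H}^G$, but you only argue for it heuristically (class-function behaviour of $h\mapsto\|(\pi_h-I)y_0\|$), and this heuristic does not yield the inequality. The phrase ``granting this'' is an admission that the proof is incomplete, and the final ``tightening of constants'' is also not carried out (note in passing that the general bound is $\kappa_G\le 2$, not $\kappa_G\le 1$, so $\eta\le 5\epsilon/\kappa_G$ can fail).

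The missing idea is precisely where the hypothesis $\Inn(H)\subseteq\alpha(G)$ does its real work. For any $G$-invariant vector $u$ (your $y_0$ qualifies), the functional $\tau(a)=\langle au,u\rangle$ is a \emph{trace} on the von Neumann algebra $M$ generated by $\pi(H)$: indeed $\tau(\pi_h a\pi_h^{-1})=\tau(a)$ for all $h\in H$ because conjugation by $h$ equals some $\alpha_g$ and $u$ is $G$-fixed. The trace property, via Murray--von Neumann equivalence $p-p\wedge q\sim p\vee q-q$, gives the key inequality
\[
\|(1-p\wedge q)u\|^2\le\|(1-p)u\|^2+\|(1-q)u\|^2
\]
for projections $p,q\in M$. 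Iterating with $p=\Proj_{\langle t\rangle}$ and using $\Proj_H=\bigwedge_{t\in T}\Proj_{\langle t\rangle}$ yields $\|(1-\Proj_H)u\|^2\le\sum_{t\in T}\|(1-\Proj_{\langle t\rangle})u\|^2$. Applied to $u=y_0\in(\mathcal{H}^H)^\perp$ this gives exactly the bound $\|y_0\|\le|T|^{1/2}L\eta/\sqrt{2}$ you wanted. So your route can be completed, but only with this von Neumann algebra ingredient; the Laplacian heuristic does not substitute for it.

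The paper's proof runs the same trace argument but organises the reduction more efficiently: it projects onto $\mathcal{H}^G$ \emph{first}, sets $w=\Proj_G v$, bounds $\|w-\pi_{t^m}w\|\le 4\kappa_G^{-1}LC$ directly, applies the trace inequality to $w$ to get $\|(1-\Proj_H)w\|\le 4\kappa_G^{-1}|T|^{1/2}LC$, and finishes with $\|v-\Proj_{G\ltimes H}v\|\le\|v-\Proj_H w\|\le\kappa_G^{-1}(1+4|T|^{1/2}L)C$. This avoids your detour through $y$ and $y_0$ and lands on the stated constant without any tightening.
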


\begin{proof}
Let $\kappa = \kappa(G,S,\pi|_G)\leqslant 2$ denote the Kazhdan constant of $G$
with respect to representation $(\pi|_G, \mathcal{H})$.
If $\kappa = 0$ there is nothing to prove, so let us assume that $\kappa>0$.
Given $v\in\mathcal{H}$ we can define
\[C=\max_{g\in S\cup T} \| v - \pi_g v\|.\]
If $w = \Proj_G v$ is the orthogonal projection of $v$ onto the subspace of $\pi|_G $-invariant vectors in $\mathcal{H}$ then $\| v - w \|\leqslant\kappa^{-1}C$.
Moreover, for every $g\in G$ and $t\in T$, one has
\[
\left\| w - \pi_{\alpha_g(t)}w \right\| =
\left\| w - \pi_{gtg^{-1}}w \right\| =
\bigg\| w - \pi_t w \bigg\| \leqslant
4\kappa^{-1}C.
\]
Thus, for every $t\in T$
\begin{align*}
\sup_m \big\| w - \pi_{t^m}w \big\| & \leqslant 4\kappa^{-1}LC,\\
\intertext{which implies}
\left\| \left(1 - \Proj_{\langle t\rangle}\right) w \right\| & \leqslant 4\kappa^{-1}LC,
\end{align*}
for the orthogonal projection $\Proj_{\langle t\rangle}$ onto $\mathcal{H}^{\langle t \rangle}$.

Since $w$ is $G$-invariant and $\Inn(H) \leqslant \alpha(G)$,
the linear functional $\tau(a)=\langle aw,w \rangle$ is a trace on the von Neumann algebra
$M$ generated by $\pi(H)$.
Namely, $\tau(a^*a)=\tau(aa^*)$ for any element $a \in M$.
In particular, if orthogonal projections $p$ and $q$ are Murray--von Neumann equivalent in $M$, i.e., 
$p = v^*v$ and $q = vv^*$ for some $v \in M$, then one has $\tau(p) = \tau(q)$.
We write $p \sim q$ to indicate that $p$ and $q$ are Murray--von Neumann equivalent in $M$.
We note that for any orthogonal projections $p,q\in M$ one has
\[
p-p\wedge q \sim p\vee q -q,
\]
where $p\wedge q$ (respectively, \ $p\vee q$) is the orthogonal projection
onto $\ran p\cap \ran q$ (respectively, $\cspan(\ran p \cup \ran q)$).
Here $\ran p$ denotes the range of $p$ and $\cspan(x)$ the closure of the linear subspace spanned by set $x$.

We have
\[
\|(p-p\wedge q)w\|^2 = \tau(p-p\wedge q) = \tau(p\vee q -q) = \|(p\vee q -q)w\|^2,
\]
see \cite[Proposition V.1.6]{Takesaki2002} for a proof of this fact.
In particular,
\[
\left\|\left(1-p\wedge q\right)w\right\|^2 \leqslant
\left\|\left(1-p\right)w\right\|^2+\left\|\left(p\vee q-q\right)w\right\|^2 \leqslant
\left\|\left(1-p\right)w\right\|^2+\left\|\left(1-q\right)w\right\|^2
\]
(the first inequality follows from the triangle inequality and the equality above; the second is the consequence of $p\vee q - q \leqslant 1 - q$). Since $\Proj_{H}=\bigwedge_{t\in T}\Proj_{\langle t\rangle}$, it follows that
\[
\big\|\left(1-\Proj_H\right)w\big\|
\leqslant \left(
    \sum_{t\in T}\left\|\left(1-\Proj_{\langle t\rangle}\right)w\right\|^2
    \right)^{1/2}
\leqslant 4\kappa^{-1}\left|T\right|^{1/2}LC.
\]
Since $\mathcal{H}^H$ is $G$-invariant (by normality of $H$), $\Proj_H w$ is still $G$-invariant.
Therefore,
\[
\big\| v - \Proj_G v \big\| \leqslant \big\| v - \Proj_H w \big\|
 \leqslant \kappa^{-1} \left(1+4|T|^{1/2}L\right)C.
\]
This proves the claim.
\end{proof}

The above proposition applies to the action of
$\Aut(\F_n)$ on $\F_n=\langle x_1,\ldots,x_n\rangle$ for $n\geqslant 2$,
since $x_i^m = x_j^{-1}\cdot x_j^{} x_i^m$ $(j\neq i)$ has $\ell(x_i^m )\leqslant 2$.

\begin{corollary}
If $\Aut(\F_n)$ has property $(T)$, then
the subgroup
\[
\Gamma =\left\lbrace \theta \in \Aut(\F_{n+1}) \colon \theta(\F_n)\subseteq \F_n\right\rbrace
\]
in $\Aut(\F_{n+1})$ has property $(T)$.
\end{corollary}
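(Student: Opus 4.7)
My plan is to deduce property $(T)$ for $\Gamma$ by applying the preceding proposition twice, after identifying $\Gamma$ up to an index-$2$ subgroup as a tower of semidirect products based on $\Aut(\F_n)$. The paper's remark already provides the first application: with $G=\Aut(\F_n)$, $H=\F_n$ and the natural action, the hypotheses $\Inn(\F_n)\leqslant \Aut(\F_n)$ and $L\leqslant 2$ hold, so the proposition yields property $(T)$ for $\Aut(\F_n)\ltimes\F_n$.

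To set up the second application I would first analyse $\Gamma$. The restriction map $\rho\colon\Gamma\to\Aut(\F_n)$, $\theta\mapsto\theta|_{\F_n}$, is well defined: the free factor $\F_n\leqslant\F_{n+1}$ is malnormal, so any $\theta\in\Aut(\F_{n+1})$ sending $\F_n$ into itself in fact maps it onto itself (via the Bass--Serre tree of $\F_n\ast\langle x_{n+1}\rangle$, or equivalently via the description of automorphisms permuting conjugacy classes of free factors). The map $\rho$ splits by extending $\phi\in\Aut(\F_n)$ to $\tilde\phi$ with $\tilde\phi(x_{n+1})=x_{n+1}$, giving $\Gamma=\Aut(\F_n)\ltimes K$ with $K=\ker\rho$. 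A classical description of the automorphisms of $\F_n\ast\langle x_{n+1}\rangle$ fixing the first factor pointwise (McCool / Fouxe-Rabinovitch type, equivalently: the $w$'s for which $\{x_1,\ldots,x_n,w\}$ is a free basis are exactly of the form $w=u\,x_{n+1}^{\pm 1}\,v$ with $u,v\in\F_n$) then shows $K\cong(\F_n\times\F_n)\rtimes\Z/2$, generated by the commuting twists $\lambda_u\colon x_{n+1}\mapsto ux_{n+1}$, $\mu_v\colon x_{n+1}\mapsto x_{n+1}v$ and the inversion $\iota\colon x_{n+1}\mapsto x_{n+1}^{-1}$. The conjugation action of $\tilde\phi$ is diagonal on $\F_n\times\F_n$ and trivial on $\Z/2$, so the orientation-preserving subgroup $\Gamma':=\Aut(\F_n)\ltimes(\F_n\times\F_n)$ has index exactly $2$ in~$\Gamma$.

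The second application of the proposition completes the proof. Since $\lambda_u$ and $\mu_v$ commute, $\Gamma'$ can be rewritten as $(\Aut(\F_n)\ltimes\F_n)\ltimes\F_n$, where the outer action of $\Aut(\F_n)\ltimes\F_n$ on the second copy of $\F_n$ factors through the projection onto $\Aut(\F_n)$. Taking $G=\Aut(\F_n)\ltimes\F_n$ (which has $(T)$ by the first application) and $H=\F_n$, the hypotheses of the proposition transfer verbatim: $T=\{x_1,\ldots,x_n\}$ generates $H$, $\Inn(H)=\Inn(\F_n)\leqslant\Aut(\F_n)=\alpha(G)$, and the same identity $x_i^m=x_j^{-1}(x_jx_i^m)$ with $j\neq i$ gives $L\leqslant 2$ because $x_jx_i^m$ lies in the $\Aut(\F_n)$-orbit of $x_j$ under a transvection. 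The proposition then yields property $(T)$ for $\Gamma'$, and the finite index $[\Gamma:\Gamma']=2$ transfers property $(T)$ to~$\Gamma$. The main step where I would spend the most care is the structural identification of $K$, and hence the index-$2$ statement; both the surjectivity of $\theta|_{\F_n}$ and the restricted form $u\,x_{n+1}^{\pm 1}\,v$ of $\theta(x_{n+1})$ rely on nontrivial input from the theory of automorphisms of free products, which I would either cite precisely or sketch via Bass--Serre theory / Stallings foldings.
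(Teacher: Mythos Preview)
Your proposal is correct and follows essentially the same route as the paper: identify $\Gamma$ structurally, deduce property~$(T)$ for $\Aut(\F_n)\ltimes\F_n$ from the proposition, iterate to get $(T)$ for $\Aut(\F_n)\ltimes(\F_n\times\F_n)$, and pass up the index-$2$ inclusion. The paper compresses this considerably---it simply records $\theta(x_{n+1})=a\,x_{n+1}^{\pm1}b$, writes $\Gamma\cong(\Aut(\F_n)\times\Z/2)\ltimes(\F_n\times\F_n)$ with the $\Z/2$ acting by the flip, and then asserts $(T)$ for $\Aut(\F_n)\ltimes(\F_n\times\F_n)$ without spelling out the second application or the index-$2$ step---so your version is more explicit but not genuinely different.
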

\begin{proof}
Let $\left\{x_i\right\}_{i=1}^{n+1}$ denote the standard generating set of $\F_{n+1}$.
Any element $\theta$ as above satisfies
$\theta(x_{n+1})=a x_{n+1}^{\pm 1} b$
for some $a,b\in \F_n$.
This means that the above subgroup is isomorphic to
$(\Aut(\F_n)\times\Z/2)\ltimes(\F_n\times \F_n)$,
where $\Aut(\F_n)$ acts on $\F_n\times \F_n$ diagonally and
$\Z/2$ acts on $\F_n\times \F_n$ by the flip.
Since $\Aut(\F_n)\ltimes \F_n$ has property $(T)$ by the above proposition,
$\Aut(\F_n)\ltimes (\F_n\times \F_n)$ has property $(T)$ as well.
\end{proof}

\begin{remark}\normalfont\label{remark: Popa's question}
In \cite[4) p. 324]{Popa2006} Popa asked for an example of an action of a property $(T)$ group $G$ on $L(\mathbb{F}_n)$, the free group factor, so that the crossed product
$L(\mathbb{F}_n)\rtimes G$ has property $(T)$. The examples considered above provide an answer to Popa's question for $n=5$. 
Indeed, by the above arguments, taking $G=\Aut(\F_{5})$ with its natural action on $L(\mathbb{F}_5)$ satisfies all the necessary conditions. 
Another example is given by taking $G=\Out(\F_5)$, in which case $L(\Aut(\F_5))$ is the crossed product $L(\F_5)\rtimes \Out(F_5)$.
\end{remark}

\section{$\SL_6(\Z)$ and $\SL_4(\Z\langle X\rangle)$}

We have also applied the symmetrized algorithm to certain other groups, for which the non-symmetrized approach as in
\cite{Netzer2015, Fujiwara2017, Kaluba2017}, was not succesful.
In the case of two linear groups, $\operatorname{SL}_6(\mathbb{Z})$ and $\operatorname{SL}_4(\mathbb{Z}\langle X\rangle)$ (generated by the set of elementary matrices $E(4)$ and $E(6)$, respectively), we obtained new estimates for the Kazhdan constants.
For the first group we obtained a certified bound
\[\kappa (\operatorname{SL}_6(\mathbb{Z}), E(6)) \geqslant 0.3812;\]
for the second group the certified bound is
\[\kappa(\operatorname{SL}_4(\mathbb{Z}\langle X\rangle), E(4)) \geqslant 0.2327.\]

\section{Final remarks}

\paragraph{Smallest radius for existence of a solution} As mentioned in the introduction, one of the reasons for our approach not producing an answer in the case of $\Aut(\F_4)$ could be that the equation
\eqref{equation: Taka's characterization Detla^2-lambda Delta = sum of xi_i^*xi} does not have a solution where all $\xi_i$'s are supported in ball of radius $2$.
It is in general unclear, for a group with property $(T)$, what is the smallest radius $r$ such that \eqref{equation: Taka's characterization Detla^2-lambda Delta = sum of xi_i^*xi}
has a solution on the ball of radius $r$.
It is equally unclear what is the radius $r$ for which the optimal $\lambda$ is attained (if it exists).

We have not been able to reprove property $(T)$, using the method presented here, for $\SL_3(\Z[X])$ on the ball of radius 2.
Recall that for $\SL_3(\Z[X])$ property $(T)$ was proved by Shalom \cite{Shalom2006} and Vaserstein \cite{Vaserstein2007}, as well as by
Ershov and Jaikin-Zapirain \cite{Ershov2010}.

Note also that the bound given in \cite{Fujiwara2017} on a ball of radius 2 was better for $\SL_4(\Z)$ than for $\SL_3(\Z)$.
All this suggests that to detect
property $(T)$ (or estimate the Kazhdan constant) of $\SL(n,\Z[X_1,...X_k])$
one needs larger ball as $k$ increases and smaller ball
as $n$ increases (although the Kazhdan constant itself
gets smaller).


\bibliographystyle{abbrvurl}
\bibliography{TforAutF5}{}

\begin{thebibliography}{10}

\bibitem{Bachoc2012}
C.~Bachoc, D.~C. Gijswijt, A.~Schrijver, and F.~Vallentin.
\newblock Invariant semidefinite programs.
\newblock In {\em Handbook on semidefinite, conic and polynomial optimization},
  volume 166 of {\em Internat. Ser. Oper. Res. Management Sci.}, pages
  219--269. Springer, New York, 2012.
\newblock \href {http://dx.doi.org/10.1007/978-1-4614-0769-0_9}
  {\path{doi:10.1007/978-1-4614-0769-0_9}}.

\bibitem{Bekka2008}
B.~Bekka, P.~de~la Harpe, and A.~Valette.
\newblock {\em Kazhdan's property ({T})}, volume~11 of {\em New Mathematical
  Monographs}.
\newblock Cambridge University Press, Cambridge, 2008.
\newblock \href {http://dx.doi.org/10.1017/CBO9780511542749}
  {\path{doi:10.1017/CBO9780511542749}}.

\bibitem{Julia2017}
J.~Bezanson, A.~Edelman, S.~Karpinski, and V.~B. Shah.
\newblock {Julia: A Fresh Approach to Numerical Computing}.
\newblock {\em SIAM Review}, 59(1):65--98, jan 2017.
\newblock \href {http://dx.doi.org/10.1137/141000671}
  {\path{doi:10.1137/141000671}}.

\bibitem{Bogopolski2010}
O.~Bogopolski and R.~Vikentiev.
\newblock Subgroups of small index in {${\rm Aut}(F_n)$} and {K}azhdan's
  property ({T}).
\newblock In {\em Combinatorial and geometric group theory}, Trends Math.,
  pages 1--17. Birkh\"auser/Springer Basel AG, Basel, 2010.
\newblock \href {http://dx.doi.org/10.1007/978-3-7643-9911-5_1}
  {\path{doi:10.1007/978-3-7643-9911-5_1}}.

\bibitem{Breuillard2014}
E.~Breuillard.
\newblock Expander graphs, property {$(\tau)$} and approximate groups.
\newblock In {\em Geometric group theory}, volume~21 of {\em IAS/Park City
  Math. Ser.}, pages 325--377. Amer. Math. Soc., Providence, RI, 2014.
\newblock \href {http://dx.doi.org/10.1090/pcms/021}
  {\path{doi:10.1090/pcms/021}}.

\bibitem{Bridson2006}
M.~R. Bridson and K.~Vogtmann.
\newblock Automorphism groups of free groups, surface groups and free abelian
  groups.
\newblock In {\em Problems on mapping class groups and related topics},
  volume~74 of {\em Proc. Sympos. Pure Math.}, pages 301--316. Amer. Math.
  Soc., Providence, RI, 2006.
\newblock \href {http://dx.doi.org/10.1090/pspum/074/2264548}
  {\path{doi:10.1090/pspum/074/2264548}}.

\bibitem{deKlerk2011}
E.~de~Klerk, C.~Dobre, and D.~V. \.{P}asechnik.
\newblock Numerical block diagonalization of matrix {$\ast$}-algebras with
  application to semidefinite programming.
\newblock {\em Math. Program.}, 129(1, Ser. B):91--111, 2011.
\newblock \href {http://dx.doi.org/10.1007/s10107-011-0461-3}
  {\path{doi:10.1007/s10107-011-0461-3}}.

\bibitem{JuMP2017}
I.~Dunning, J.~Huchette, and M.~Lubin.
\newblock Jump: A modeling language for mathematical optimization.
\newblock {\em SIAM Review}, 59(2):295--320, 2017.
\newblock \href {http://dx.doi.org/10.1137/15M1020575}
  {\path{doi:10.1137/15M1020575}}.

\bibitem{Ellenberg2014}
J.~S. Ellenberg.
\newblock Superstrong approximation for monodromy groups.
\newblock In {\em Thin groups and superstrong approximation}, volume~61 of {\em
  Math. Sci. Res. Inst. Publ.}, pages 51--71. Cambridge Univ. Press, Cambridge,
  2014.
\newblock \href {http://arxiv.org/abs/1210.3757} {\path{arXiv:1210.3757}}.

\bibitem{Ershov2010}
M.~Ershov and A.~Jaikin-Zapirain.
\newblock Property ({T}) for noncommutative universal lattices.
\newblock {\em Invent. Math.}, 179(2):303--347, 2010.
\newblock \href {http://dx.doi.org/10.1007/s00222-009-0218-2}
  {\path{doi:10.1007/s00222-009-0218-2}}.

\bibitem{Nemo2017}
C.~Fieker, W.~Hart, T.~Hofmann, and F.~Johansson.
\newblock {Nemo/Hecke: Computer Algebra and Number Theory Packages for the
  Julia Programming Language}.
\newblock In {\em Proceedings of the 2017 ACM on International Symposium on
  Symbolic and Algebraic Computation}, ISSAC '17, pages 157--164, New York, NY,
  USA, 2017. ACM.
\newblock \href {http://dx.doi.org/10.1145/3087604.3087611}
  {\path{doi:10.1145/3087604.3087611}}.

\bibitem{Fujiwara2017}
K.~Fujiwara and Y.~Kabaya.
\newblock Computing kazhdan constants by semidefinite programming.
\newblock {\em Experimental Mathematics}, 28(0):1--12, 2017.
\newblock \href {http://dx.doi.org/10.1080/10586458.2017.1396509}
  {\path{doi:10.1080/10586458.2017.1396509}}.

\bibitem{Gersten1984}
S.~M. Gersten.
\newblock A presentation for the special automorphism group of a free group.
\newblock {\em J. Pure Appl. Algebra}, 33(3):269--279, 1984.
\newblock \href {http://dx.doi.org/10.1016/0022-4049(84)90062-8}
  {\path{doi:10.1016/0022-4049(84)90062-8}}.

\bibitem{Gilman1977}
R.~Gilman.
\newblock Finite quotients of the automorphism group of a free group.
\newblock {\em Canad. J. Math.}, 29(3):541--551, 1977.
\newblock \href {http://dx.doi.org/10.4153/CJM-1977-056-3}
  {\path{doi:10.4153/CJM-1977-056-3}}.

\bibitem{Grunewald2009}
F.~Grunewald and A.~Lubotzky.
\newblock Linear representations of the automorphism group of a free group.
\newblock {\em Geom. Funct. Anal.}, 18(5):1564--1608, 2009.
\newblock \href {http://dx.doi.org/10.1007/s00039-009-0702-2}
  {\path{doi:10.1007/s00039-009-0702-2}}.

\bibitem{Jameson1970}
G.~Jameson.
\newblock {\em Ordered linear spaces}.
\newblock Lecture Notes in Mathematics, Vol. 141. Springer-Verlag, Berlin-New
  York, 1970.
\newblock \href {http://dx.doi.org/10.1007/BFb0059130}
  {\path{doi:10.1007/BFb0059130}}.

\bibitem{Kaluba2018}
M.~{Kaluba}, D.~{Kielak}, and P.~W. {Nowak}.
\newblock {On property (T) for $\Aut(F_n)$ and $\SL_n(\Z)$}.
\newblock dec 2018.
\newblock \href {http://arxiv.org/abs/1812.03456} {\path{arXiv:1812.03456}}.

\bibitem{Kaluba2017}
M.~{Kaluba} and P.~W. {Nowak}.
\newblock Certifying numerical estimates of spectral gaps.
\newblock {\em Groups Complexity Cryptology, to appear}, mar 2017.
\newblock \href {http://arxiv.org/abs/1703.09680} {\path{arXiv:1703.09680}}.

\bibitem{Kaluba2017a}
M.~Kaluba, P.~W. Nowak, and N.~Ozawa.
\newblock {An approximation of the spectral gap for the Laplace operator on
  $\operatorname{SAut}(\mathbb{F}_5)$}, dec 2017.
\newblock \href {http://dx.doi.org/10.5281/zenodo.1133440}
  {\path{doi:10.5281/zenodo.1133440}}.

\bibitem{Kassabov2007}
M.~Kassabov.
\newblock Symmetric groups and expander graphs.
\newblock {\em Invent. Math.}, 170(2):327--354, 2007.
\newblock \href {http://dx.doi.org/10.1007/s00222-007-0065-y}
  {\path{doi:10.1007/s00222-007-0065-y}}.

\bibitem{Lubotzky1994}
A.~Lubotzky.
\newblock {\em Discrete groups, expanding graphs and invariant measures},
  volume 125 of {\em Progress in Mathematics}.
\newblock Birkh\"auser Verlag, Basel, 1994.
\newblock With an appendix by Jonathan D. Rogawski.
\newblock \href {http://dx.doi.org/10.1007/978-3-0346-0332-4}
  {\path{doi:10.1007/978-3-0346-0332-4}}.

\bibitem{Lubotzky2001}
A.~Lubotzky and I.~Pak.
\newblock The product replacement algorithm and {K}azhdan's property ({T}).
\newblock {\em J. Amer. Math. Soc.}, 14(2):347--363, 2001.
\newblock \href {http://dx.doi.org/10.1090/S0894-0347-00-00356-8}
  {\path{doi:10.1090/S0894-0347-00-00356-8}}.

\bibitem{Magnus1966}
W.~Magnus, A.~Karrass, and D.~Solitar.
\newblock {\em Combinatorial group theory: {P}resentations of groups in terms
  of generators and relations}.
\newblock Interscience Publishers [John Wiley \& Sons, Inc.], New
  York-London-Sydney, 1966.

\bibitem{McCool1989}
J.~McCool.
\newblock A faithful polynomial representation of {${\rm Out}\,F_3$}.
\newblock {\em Math. Proc. Cambridge Philos. Soc.}, 106(2):207--213, 1989.
\newblock \href {http://dx.doi.org/10.1017/S0305004100078026}
  {\path{doi:10.1017/S0305004100078026}}.

\bibitem{Murota2010}
K.~Murota, Y.~Kanno, M.~Kojima, and S.~Kojima.
\newblock A numerical algorithm for block-diagonal decomposition of matrix
  {$*$}-algebras with application to semidefinite programming.
\newblock {\em Jpn. J. Ind. Appl. Math.}, 27(1):125--160, 2010.
\newblock \href {http://dx.doi.org/10.1007/s13160-010-0006-9}
  {\path{doi:10.1007/s13160-010-0006-9}}.

\bibitem{Netzer2015}
T.~Netzer and A.~Thom.
\newblock {Kazhdan's Property (T) via Semidefinite Optimization}.
\newblock {\em Experimental Mathematics}, 24(3):371--374, jul 2015.
\newblock \href {http://dx.doi.org/10.1080/10586458.2014.999149}
  {\path{doi:10.1080/10586458.2014.999149}}.

\bibitem{Neumaier2006}
A.~Neumaier.
\newblock Computer-assisted proofs.
\newblock In {\em 12th GAMM - IMACS International Symposium on Scientific
  Computing, Computer Arithmetic and Validated Numerics (SCAN 2006)}, pages
  5--5, Sept 2006.
\newblock \href {http://dx.doi.org/10.1109/SCAN.2006.11}
  {\path{doi:10.1109/SCAN.2006.11}}.

\bibitem{O'Donoghue2016}
B.~O'Donoghue, E.~Chu, N.~Parikh, and S.~Boyd.
\newblock Conic optimization via operator splitting and homogeneous self-dual
  embedding.
\newblock {\em Journal of Optimization Theory and Applications},
  169(3):1042--1068, Jun 2016.
\newblock \href {http://dx.doi.org/10.1007/s10957-016-0892-3}
  {\path{doi:10.1007/s10957-016-0892-3}}.

\bibitem{Ozawa2016}
N.~Ozawa.
\newblock {Noncommutative Real Algebraic Geometry of Kazhdan's Property (T)}.
\newblock {\em Journal of the Institute of Mathematics of Jussieu},
  15(01):85--90, jan 2016.
\newblock \href {http://dx.doi.org/10.1017/S1474748014000309}
  {\path{doi:10.1017/S1474748014000309}}.

\bibitem{Popa2006}
S.~Popa.
\newblock Some rigidity results for non-commutative {B}ernoulli shifts.
\newblock {\em J. Funct. Anal.}, 230(2):273--328, 2006.
\newblock \href {http://dx.doi.org/10.1016/j.jfa.2005.06.017}
  {\path{doi:10.1016/j.jfa.2005.06.017}}.

\bibitem{Schmudgen2009}
K.~Schm\"udgen.
\newblock Noncommutative real algebraic geometry---some basic concepts and
  first ideas.
\newblock In {\em Emerging applications of algebraic geometry}, volume 149 of
  {\em IMA Vol. Math. Appl.}, pages 325--350. Springer, New York, 2009.
\newblock \href {http://dx.doi.org/10.1007/978-0-387-09686-5_9}
  {\path{doi:10.1007/978-0-387-09686-5_9}}.

\bibitem{Serre1977}
J.-P. Serre.
\newblock {\em Linear representations of finite groups}.
\newblock Springer-Verlag, New York-Heidelberg, 1977.
\newblock Translated from the second French edition by Leonard L. Scott,
  Graduate Texts in Mathematics, Vol. 42.
\newblock \href {http://dx.doi.org/10.1007/978-1-4684-9458-7}
  {\path{doi:10.1007/978-1-4684-9458-7}}.

\bibitem{Shalom2006}
Y.~Shalom.
\newblock The algebraization of {K}azhdan's property ({T}).
\newblock In {\em International {C}ongress of {M}athematicians. {V}ol. {II}},
  pages 1283--1310. Eur. Math. Soc., Z\"urich, 2006.
\newblock \href {http://dx.doi.org/10.4171/022} {\path{doi:10.4171/022}}.

\bibitem{Takesaki2002}
M.~Takesaki.
\newblock {\em Theory of operator algebras. {I}}, volume 124 of {\em
  Encyclopaedia of Mathematical Sciences}.
\newblock Springer-Verlag, Berlin, 2002.
\newblock Reprint of the first (1979) edition, Operator Algebras and
  Non-commutative Geometry, 5.

\bibitem{Vaserstein2007}
L.~Vaserstein.
\newblock Bounded reduction of invertible matrices over polynomial rings by
  addition operations.
\newblock {\em preprint}, 2007.
\newblock URL: \url{http://www.math.psu.edu/vstein/pm2.pdf}.

\end{thebibliography}

\end{document}